\newtheorem{thm}{Theorem}[section]
\newtheorem{lem}[thm]{Lemma}
\newtheorem{prop}[thm]{Proposition}
\theoremstyle{definition}
\newtheorem{rem}[thm]{Remark}
\numberwithin{equation}{section}
\newcommand{\norm}[1]{\left\Vert#1\right\Vert}
\newcommand{\R}{\mathbb R}
\newcommand{\ov}{\overline}
\newcommand{\p}{\partial}
\newcommand{\comment}[1]{}
\newenvironment{myindentpar}[1]%
{\begin{list}{}%
         {\setlength{\leftmargin}{#1}}%
         \item[]%
}
{\end{list}}
\begin{document}

\title[The prescribed affine mean curvature and Abreu's equations]{$W^{4, p}$ solution to the second boundary value problem of the prescribed affine mean curvature and Abreu's equations}
\author{Nam Q. Le}
\address{Department of Mathematics, Indiana University,
Bloomington, 831 E 3rd St, IN 47405, USA.}
\email{nqle@indiana.edu}

\thanks{The research of the author was supported in part by the National Science Foundation under grant DMS-1500400 and an Indiana University Summer Faculty Fellowship}
\subjclass[2010]{35J40, 35B65, 35J96, 53A15}
\keywords{Affine mean curvature equation, Abreu's equation, second boundary value problem, Legendre transform, linearized Monge-Amp\`ere equation}

\begin{abstract}
The second boundary value problem of the prescribed affine mean curvature equation is a nonlinear, fourth order, geometric partial differential equation. It was 
introduced by Trudinger and Wang in 2005 in their investigation of the affine Plateau problem in affine geometry. The previous works of Trudinger-Wang, Chau-Weinkove
and the author solved this global problem in $W^{4,p}$ under some restrictions on the sign or integrability of the affine mean curvature.
We remove these restrictions in this paper and obtain $W^{4,p}$ solution
to the second boundary value problem
when 
the affine mean curvature belongs to $L^p$ with $p$ greater than the dimension.  Our self-contained analysis also covers the case of Abreu's equation.

\end{abstract}
\maketitle

\section{Introduction and the main result}
In this paper, we are interested in obtaining global $W^{4,p}$ solution and 
$W^{4, p}$ estimates for the second boundary value problem of the prescribed affine mean curvature equation in dimensions $n\geq 2$. More generally, let
$G: (0,\infty) \rightarrow \mathbb{R}$ be a smooth, strictly increasing and strictly concave function on $(0, \infty)$. We consider
a fourth order, fully nonlinear, 
geometric partial differential equation of the form
\begin{equation}
\label{AMCE}
L[u]:= U^{ij}w_{ij} =f,\quad w = G'(\det D^{2} u)\quad \text{in}~\Omega,
\end{equation}
where  $\Omega\subset\R^n$ is a bounded, smooth and uniformly convex, $u$ is a locally uniformly convex function in $\overline{\Omega}$, and throughout, $$U = (U^{ij})=(\det D^{2} u) (D^{2} u)^{-1}$$ is the matrix of cofactors of the 
Hessian matrix $D^{2}u= (u_{ij})$.

We note that (\ref{AMCE}) consists of a Monge-Amp\`ere equation for $u$ in the form of $\det D^2 u=
G'^{-1}(w)$ and a linearized Monge-Amp\`ere equation for $w$ in the form of $U^{ij} w_{ij}=f$ because
the coefficient matrix $U$ comes from linearization of the Monge-Amp\`ere operator: $U=\frac{\p\det D^2 u}{\p u_{ij}}.$

The second boundary value problem for (\ref{AMCE}) prescribes the values of $u$ and its Hessian determinant $\det D^2 u$ on the boundary, or equivalently, 
\begin{equation}
\label{SBV}
u=\varphi,~~~ w=\psi~~~\text{on}~\p\Omega.
\end{equation}

The problem (\ref{AMCE})-(\ref{SBV}) with $G(d)=\frac{d^{\theta}-1}{\theta}$ and $\theta=\frac{1}{n +2}$ was introduced by Trudinger-Wang \cite{TW05} in their investigation of the affine Plateau problem in affine geometry. In this context, the quantity
$-\frac{1}{n+1} L[u]$
is the {\it affine mean curvature} of the graph of $u$; in particular, equation (\ref{AMCE}) with $f\equiv 0$ corresponds to the {\it affine maximal surface equation} \cite{TW00}. 
In the limiting case $\theta=0$ of $\frac{d^{\theta}-1}{\theta}$, we take $G(d)=\log d$ and (\ref{AMCE}) is then known as Abreu's equation in the 
context of existence of K\"ahler metric of constant scalar curvature \cite{Ab, CHLS, CLS, D1, D2, D3, D4, FS, Zhou, ZZ}. 

For a general concave function $G$, Donaldson \cite{D5} investigated local solutions of (\ref{AMCE}) with $f\equiv 0$ while Savin and the author \cite{LS}
studied regularity of (\ref{AMCE}) with Dirichlet and Neumann boundary conditions on $w$. In \cite{LS}, we considered (\ref{AMCE})
as an Euler-Lagrange equation of a Monge-Amp\`ere functional motivated by the Mabuchi functional in complex geometry. In fact, 
(\ref{AMCE}) is the Euler-Lagrange equation, with respect to compactly supported perturbations, of 
the functional
\begin{equation}\label{functional}
J[u]= \int_{\Omega}  G(\det D^2 u)dx -\int_{\Omega}  u fdx,
\end{equation}
defined over strictly convex functions $u$ on $\Omega$. For simplicity, we call $L[u]$ in (\ref{AMCE}), where $G$ is a general concave function, the {\it generalized affine mean curvature} 
of the graph of $u$.

It is an interesting problem, both geometrically and analytically, to study the solvability of the  fourth order, fully nonlinear equation
(\ref{AMCE})-(\ref{SBV}). As in the classical theory of second order elliptic equations, we are led naturally to the following:\\
{\bf Problem.} {\it Suppose the boundary data $\varphi$ and $\psi$ are smooth. Investigate the solvability of 
$C^{4,\alpha}(\overline{\Omega})$ solutions to (\ref{AMCE})-(\ref{SBV}) when $f$ is H\"older continuous and $W^{4,p}(\Omega)$ solutions when $f$ is less regular. }

Note that the case of dimension $n=1$ is very easy to deal with and is by now completely settled (see also \cite{CW}). Thus we assume throughout
that $n\geq 2$. Let us recall previous results on this problem in chronological order.

The situation of $C^{4,\alpha}(\overline{\Omega})$ solutions is by now well understood: Trudinger-Wang \cite{TW08} solved this problem 
when $f\in C^{\alpha}(\overline\Omega)$, $f\leq 0$, $G(d)=\frac{d^{\theta}-1}{\theta}$ and $\theta\in (0, 1/n)$. Very recently, Chau-Weinkove \cite{CW} completely 
removed the sign condition 
on $f$ in this case.

The situation of $W^{4,p}(\Omega)$ solutions is as follows:
\begin{myindentpar}{1cm}
$\bullet$ Trudinger-Wang \cite{TW08} solved this problem for all $p\in (1,\infty)$
when $f\in L^{\infty}(\Omega)$, $f\leq 0$, $G(d)=\frac{d^{\theta}-1}{\theta}$ and $\theta\in (0, 1/n)$.\\
$\bullet$ The author \cite{Le} solved this problem when $f\leq 0$, $f\in L^p(\Omega)$ with $p>n$, 
$G(d)=\frac{d^{\theta}-1}{\theta}$ and $\theta<1/n$.\\
$\bullet$ Chau-Weinkove \cite{CW} recently solved this problem for all $p\in (1,\infty)$ when $f\in L^{\infty}(\Omega)$, $G(d)=\frac{d^{\theta}-1}{\theta}$ and $0<\theta<1/n$, 
thus extending
the work of Trudinger-Wang \cite{TW08}. In this particular case of $G$ and $\theta$, they also solved this problem for $f\in L^ p(\Omega)$, $p>n$ with
$f^{+}:=\max(0, f)\in L^{q}(\Omega)$
for some $q>1/\theta$, thus extending the work of the author. Moreover, they also solved the problem for more general $G$ and $f$ with certain high integrability. 
More precisely, $f$ satisfies
 $\displaystyle{f \in L^{p}(\Omega)}$, $p>n$ and $f^+ \in L^{\infty}(\Omega)$ and $G: (0,\infty) \rightarrow \mathbb{R}$ together with its derivative $w(d)= G'(d)$
 satisfies in addition:
\begin{myindentpar}{2cm}
 (A1) $\displaystyle{w' + (1-\frac{1}{n}) \frac{w}{d} \le 0}$. \\
(A2) $d w \ge c>0$ for some $c>0$ and all $d\ge 1$.\\
(A3) $\displaystyle{d^{1-1/n} w \rightarrow \infty}$ as $d \rightarrow 0$.

\end{myindentpar}
\end{myindentpar}  

Chau-Weinkove raised the question \cite[Remark 1.1]{CW} on the weakest regularity assumption on $f$ giving a solution $u\in W^{4,p}$ to 
(\ref{AMCE})-(\ref{SBV}). 
\subsection{The main result}
Our main result, Theorem \ref{mainthm}, asserts the solvability of (\ref{AMCE})-(\ref{SBV}) in $W^{4,p}(\Omega)$ when $f\in L^p(\Omega)$ with $p>n$. In particular, it answers Chau-Weinkove's question 
under the weakest possible regularity on the generalized affine mean curvature $f$ and general concave function $G$ satisfying a set of conditions even weaker than (A1)-(A3).

From now on, we assume that $G: (0,\infty) \rightarrow \mathbb{R}$ is a  smooth strictly concave function on $(0,\infty)$ whose derivative $w(d)=G'(d)$ is strictly 
positive. We introduce the new coercivity condition: 
\begin{myindentpar}{3cm}
(B2) $G(d) - dG'(d)\rightarrow \infty$ when $d\rightarrow\infty$.
\end{myindentpar}

\begin{thm} \label{mainthm}
Fix $p>n$ and assume that (A1), (B2) and (A3) are satisfied.  Let $\Omega$ be a bounded, uniformly convex domain in $\R^n$ with $\partial \Omega \in C^{3, 1}$. 
Suppose $f \in L^{p}(\Omega)$, $\varphi \in W^{4,p}(\Omega)$ and $\psi \in W^{2,p}(\Omega)$ with $\inf_{\Omega}\psi>0$.    
Then there exists  a unique uniformly convex solution $u \in W^{4, p}(\Omega)$ to the second boundary value problem (\ref{AMCE})-(\ref{SBV}).
\end{thm}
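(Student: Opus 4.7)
The plan is to prove Theorem \ref{mainthm} by the method of continuity, with the bulk of the work lying in uniform a priori $W^{4,p}$ estimates. I would embed the target problem in the one-parameter family
\begin{equation*}
L[u_t] = t f \text{ in } \Omega, \qquad u_t = \varphi, \quad G'(\det D^2 u_t) = (1-t)\psi_0 + t\psi \text{ on } \p\Omega,
\end{equation*}
where $\psi_0>0$ is a smooth reference datum chosen so the $t=0$ problem is explicitly solvable (first a Monge--Amp\`ere equation for $u_0$, then a linear equation for $w_0$). Let $\mathcal{T} \subset [0,1]$ be the set of parameters admitting a uniformly convex solution $u_t \in W^{4,p}(\Omega)$. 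Openness of $\mathcal{T}$ is the easier half: since $p>n$ ensures $u_t \in C^{2,\alpha}(\ov{\Omega})$, the linearization at $u_t$ is a fourth order uniformly elliptic operator with continuous coefficients, and the implicit function theorem in $W^{4,p}(\Omega)$ applies via standard Calder\'on--Zygmund theory.

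The main burden is closedness, i.e.\ a priori estimates uniform in $t$. I would proceed in four steps. First, combine the variational structure of $J$ in \eqref{functional} with the new coercivity condition (B2): comparison of $J[u_t]$ with the value of $J$ at a smooth convex competitor, together with H\"older's inequality (using $f\in L^p$, $p>n$), bounds $\int_{\Omega} G(\det D^2 u_t)\,dx$ from above, which by (B2) yields integral control on $\det D^2 u_t$ and thence a $C^0$ bound on $u_t$ via the Aleksandrov maximum principle. Second, $\inf_{\Omega}\psi>0$, condition (A3), and a concavity-based barrier pin $\det D^2 u_t$ between two positive constants in a boundary strip; combined with (A1), this gives strict convexity and $C^{1,\alpha}$ regularity of $u_t$ up to $\p\Omega$ via Caffarelli's Monge--Amp\`ere theory and its boundary analogue.

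Third, with $\det D^2 u_t$ now pinched and H\"older continuous, the linearized Monge--Amp\`ere operator $U_t^{ij}\p_{ij}$ falls within the scope of the recently developed global $W^{2,p}$ theory for LMA. Applying it to $U_t^{ij}(w_t)_{ij} = tf \in L^p$ with boundary data $w_t \in W^{2,p}(\p\Omega)$, I would obtain a uniform global $W^{2,p}$ bound on $w_t$, hence $C^{1,\alpha}(\ov{\Omega})$ by Sobolev embedding. Inverting $w = G'(\det D^2 u)$ transfers this to $C^{1,\alpha}$ control on $\det D^2 u_t$; Caffarelli's global $C^{2,\alpha}$ Monge--Amp\`ere estimates, followed by a bootstrap through the fourth order equation, then deliver the desired $W^{4,p}$ bound on $u_t$.

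The principal obstacle --- and the real novelty of the paper --- is producing the a priori $L^\infty$ upper bound on $w_t$ under only $f \in L^p$ and with no sign or high-integrability assumption on $f$, this bound being a prerequisite for the global $W^{2,p}$ theory. In earlier works $f \leq 0$ or $f^+ \in L^q$ with $q$ large delivered it directly from the maximum principle; here the new coercivity (B2) must substitute, via an Aleksandrov--Bakelman--Pucci-type argument for the nondivergence LMA operator applied to the positive part of $w_t - \tilde{\psi}$ (for a suitable smooth supersolution $\tilde{\psi}$) and exploiting the Monge--Amp\`ere lower bound from step two. Uniqueness is comparatively straightforward: strict concavity of $G$ combined with the concavity properties of $\int G(\det D^2 \cdot)$ on convex functions make $J$ strictly concave, so any two $W^{4,p}$ solutions, both being critical points of $J$ with the same boundary trace $\varphi$, must coincide.
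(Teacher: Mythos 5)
Your overall architecture (reduce to uniform a priori $W^{4,p}$ estimates, then conclude via a continuity or degree argument together with the global linearized Monge--Amp\`ere $W^{2,p}$ theory) matches the paper, and several intermediate steps track it faithfully: the use of (A3) and an ABP estimate for $U^{ij}w_{ij}=f$, the pinching of $\det D^2 u$ near $\p\Omega$, and the final bootstrap correspond to Lemmas \ref{upwlem}, \ref{Dubound} and the proof of Theorem \ref{keythm}. But there is a genuine gap at what you yourself flag as the principal obstacle, and a sign confusion conceals it. Since $G$ is concave, $w=G'(\det D^2 u)$ is a \emph{decreasing} function of $\det D^2 u$; what the classical maximum principle yields under $f\le 0$ is a positive \emph{lower} bound on $w$, equivalently an \emph{upper} bound on $\det D^2 u$. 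It is this upper bound on $\det D^2 u$ that is hard, and your proposed substitute --- ABP applied directly to the positive part of $w_t-\tilde\psi$ on $\Omega$ --- addresses the wrong direction and, more importantly, cannot close the correct one. The ABP estimate for $U^{ij}w_{ij}=f$ produces the term $\left\|f^{+}\,(\det D^2 u)^{(1-n)/n}\right\|_{L^n(\Omega)}$, whose control presupposes the very upper bound on $\det D^2 u$ one is after; this is exactly why Chau--Weinkove needed high integrability of $f^{+}$. Under only $f\in L^p$, $p>n$, the argument is circular.

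The paper's actual key step (Lemmas \ref{Legeq} and \ref{boundd}) is to pass to the Legendre transform $u^*$ on $\Omega^*=Du(\Omega)$, where $u^*$ solves the dual equation ${U^*}^{ij}w^*_{ij}=-f(Du^*)\det D^2 u^*$ with $w^*=G(d)-dG'(d)$. Applying ABP there and changing variables $y=Du(x)$ makes the Jacobian $\det D^2 u$ cancel exactly, giving $\|w^*\|_{L^\infty(\Omega^*)}\le C+C\|f\|_{L^n(\Omega)}$ with no extra integrability; condition (B2), which you invoke but never actually use in your step four, is precisely what converts this $L^\infty$ bound on $G(d)-dG'(d)$ into an upper bound on $d=\det D^2 u$. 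This argument also requires $\mathrm{diam}(\Omega^*)$ to be controlled, which is why the paper first establishes the global gradient bound of Lemma \ref{Dubound}; your outline records the gradient bound but not its purpose.

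A secondary issue is your step one. The $L^1$ bound on $\det D^2 u$ and the $C^0$ bound on $u$ (Lemma \ref{udet}) use only (A1), not (B2), and they do not follow from an upper bound on $\int_\Omega G(\det D^2 u)$: since $G$ is concave increasing, $G(d)\le G(1)+G'(1)(d-1)$, so bounding $\int G(\det D^2 u)$ from above gives no control on $\int\det D^2 u$. The paper instead tests the concavity inequality $G(\det D^2\tilde u)-G(\det D^2 u)\le wU^{ij}(\tilde u-u)_{ij}$ against both a convex competitor $\tilde u$ and a concave competitor $\hat u$, integrates by parts, and exploits the boundary identity $U^{\nu\nu}=K(u_\nu)^{n-1}+E$ to close a self-improving estimate on $\int_{\p\Omega}(u_\nu^+)^n$. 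Without something of this sort, comparing $J[u_t]$ with $J$ at a competitor does not deliver the a priori $C^0$ bound you need to start the scheme.
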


It is quite remarkable that the integrability condition of the generalized affine mean curvature $L[u]$ in Theorem \ref{mainthm} does not depend on the concave function $G$. In the special case of
$G(d)=\frac{d^{\theta}-1}{\theta}$ with $\theta=\frac{1}{n+2}$, Theorem \ref{mainthm} tells us that we can prescribe the $L^p$ 
affine mean curvature (for any finite $p>n$) of the graph of a uniformly convex function with smooth Dirichlet boundary conditions on the function and its Hessian determinant.

By using the Leray-Schauder degree theory as in Trudinger-Wang \cite{TW05}, Theorem \ref{mainthm} follows from the following global a priori $W^{4,p}$ estimates for solutions 
of (\ref{AMCE})-(\ref{SBV}).
\begin{thm} 
\label{keythm} Assume that (A1), (B2) and (A3) are satisfied.
Let $p> n$ and let $\Omega$ be a bounded, uniformly convex 
domain in $\R^{n}$ with $\p\Omega\in C^{3,1}$. Suppose $\varphi \in W^{4,p}(\Omega), \psi\in W^{2,p}(\Omega)$, $\inf_{\Omega}\psi>0$ and $f\in L^{p}(\Omega)$. Then, 
for any uniformly convex solution $u\in C^{4}(\overline{\Omega})$ of (\ref{AMCE})-(\ref{SBV}), we have the estimate
\begin{equation}
\label{global-est}
\norm{u}_{W^{4,p}(\Omega)}\leq C,
\end{equation}
where $C$ depends 
on $n, p, G, \Omega$, $\norm{f}_{L^{p}(\Omega)}$, $\norm{\varphi}_{W^{4,p}(\Omega)}, \norm{\psi}_{W^{2,p}(\Omega)}$, and $\inf_{\Omega} \psi.$
\end{thm}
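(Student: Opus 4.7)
The plan is the familiar one for fourth-order affine-type problems: reduce the $W^{4,p}$ estimate to uniform two-sided positive bounds
\[
0<c\le w\le C\qquad\text{in }\overline\Omega,
\]
after which the coupled second-order system $\det D^2 u=(G')^{-1}(w)$ and $U^{ij}w_{ij}=f$ becomes amenable to a standard bootstrap. The boundary part of this bound is free, since $w=\psi\ge\inf_{\Omega}\psi>0$ on $\partial\Omega$ and $G'$ is a strictly decreasing positive function under our hypotheses; the real work is entirely in the interior.

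Preliminary estimates start with convexity of $u$ and the boundary datum $u=\varphi$ on $\partial\Omega$, which give uniform $\|u\|_{L^\infty}$ and one-sided gradient control. The role of the new coercivity (B2) is to replace the pointwise condition (A2) and the $L^\infty$ control on $f^+$ used by Chau--Weinkove with an \emph{integral} coercivity: by testing the variational characterization of $u$ against a convex competitor with the same boundary values, one controls $\int_\Omega[G(\det D^2 u)-(\det D^2 u)\,G'(\det D^2 u)]\,dx$ from above, and (B2) then turns this into a bound $\int_\Omega \det D^2 u\,dx\le C$. Together with the boundary data this gives Lipschitz control of $u$ and an $L^1$ bound on the Monge--Amp\`ere measure, which is exactly the integrability needed to close the ABP estimate for the linearized operator later.

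The central and hardest step is upgrading these integral bounds to the pointwise two-sided bound on $w$. For the upper bound on $w$, I would apply an ABP-type estimate for the linearized Monge--Amp\`ere operator $U^{ij}\partial_{ij}$ to $w$ minus a suitable barrier, exploiting $f\in L^p$ with $p>n$ together with the $L^1$ Monge--Amp\`ere bound established above; condition (A1) supplies the sign structure needed to close the comparison. For the lower bound on $w$, equivalently an upper bound on $\det D^2 u$, the blow-up rate in (A3) provides a barrier preventing $\det D^2 u$ from becoming too large, and is combined with (A1) and (B2) via a Pogorelov/ABP argument. This is the step that eliminates the sign assumption on $f$ and the $L^\infty$ bound on $f^+$ of prior works, and it is the principal obstacle on which I expect to spend the bulk of the effort.

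Once $0<c\le w\le C$ is established, the rest is a clean bootstrap using standard tools: the Caffarelli--Guti\'errez interior Harnack and Savin's boundary Harnack for $U^{ij}\partial_{ij}$ yield $w\in C^\alpha(\overline\Omega)$; Caffarelli's interior $W^{2,p}$ and Savin's boundary $C^{2,\alpha}$ theory applied to $\det D^2 u=(G')^{-1}(w)$ give $u\in W^{2,p}(\Omega)\cap C^{2,\alpha}(\overline\Omega)$ and in particular $U^{ij}\in C^\alpha(\overline\Omega)$; Calder\'on--Zygmund applied to $U^{ij}w_{ij}=f\in L^p$ then yields $w\in W^{2,p}(\Omega)$, and a second pass through global Monge--Amp\`ere $W^{4,p}$ regularity finally gives $u\in W^{4,p}(\Omega)$. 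The hypothesis $\partial\Omega\in C^{3,1}$ is precisely what is needed to run Savin's boundary theory at this regularity.
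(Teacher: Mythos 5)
The overall reduction you identify---getting two-sided positive bounds $0<c\le w\le C$ in $\overline\Omega$ and then bootstrapping through the Caffarelli--Guti\'errez/Savin Harnack theory for the linearized operator and the Trudinger--Wang/Savin $C^{2,\alpha}$ theory for the Monge--Amp\`ere equation---matches the paper's final step, and your account of that bootstrap is accurate. However, the proposal misses the central idea that makes the hard half of the $w$-bound work, and several of the intermediate claims as written are not justified.

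First, the upper bound $w\le C$, equivalently $\det D^2u\ge C^{-1}$, is obtained in the paper (Lemma \ref{upwlem}) by a single ABP estimate applied to $U^{ij}w_{ij}=f$ on $\Omega$ itself, using $\det U=(\det D^2u)^{n-1}$ and the self-consistency afforded by (A3); it does not use (A1) or the $L^1$ Monge--Amp\`ere bound as your sketch suggests. More seriously, your route to $\int_\Omega \det D^2 u\le C$ is broken: (B2) says only that $G(d)-dG'(d)\to\infty$ as $d\to\infty$, with no rate, so an upper bound on $\int_\Omega [G(\det D^2u)-(\det D^2u)G'(\det D^2u)]$ does \emph{not} convert into an $L^1$ bound on $\det D^2u$ (consider $G(d)-dG'(d)\sim\log d$, which (B2) allows). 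The paper's Lemma \ref{udet} gets this $L^1$ bound differently, from (A1) alone, by testing the concavity of $\tilde G(d)=G(d^n)$ against explicitly constructed convex/concave competitors, integrating by parts twice, and controlling the boundary terms through the Gauss curvature of $\partial\Omega$. Likewise, the claim that the $L^1$ Monge--Amp\`ere bound plus boundary data ``gives Lipschitz control of $u$'' is not true in general; the paper's Lemma \ref{Dubound} must first apply a boundary H\"older estimate (Proposition \ref{Hlem}) to $U^{ij}w_{ij}=f$ to show $\det D^2u$ is bounded above near $\partial\Omega$, and only then build a convex barrier $\varphi+\mu(e^\rho-1)$ to control $u_\nu$ from above and obtain the global gradient bound.

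Second, and most importantly, the argument you propose for the lower bound on $w$, equivalently the upper bound on $\det D^2u$, does not identify the key mechanism. You attribute it to (A3) providing ``a barrier preventing $\det D^2 u$ from becoming too large,'' but (A3) constrains $G'$ as $d\to0$ and plays no role for large $d$; and invoking ``a Pogorelov/ABP argument'' does not say where the ABP estimate is applied or why it closes with only $f\in L^p$, $p>n$. The paper's decisive step (Lemmas \ref{Legeq} and \ref{boundd}) is to pass to the Legendre transform $u^*$, observe that $w^*=G(\det D^2u)-(\det D^2u)G'(\det D^2u)$ solves the dual linearized equation ${U^*}^{ij}w^*_{ij}=-f(Du^*)\det D^2u^*$ on $\Omega^*=Du(\Omega)$, and apply ABP there: after the change of variables $y=Du(x)$ the determinant weight in the ABP estimate exactly cancels the Jacobian, so one obtains $\|w^*\|_{L^\infty(\Omega^*)}\le C(1+\|f\|_{L^n(\Omega)})$, using the gradient bound to control $\mathrm{diam}(\Omega^*)$. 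Then and only then does (B2) convert a \emph{pointwise} bound on $w^*=G(d)-dG'(d)$ into an upper bound on $d=\det D^2u$. Without this dualization, the ABP estimate for $U^{ij}w_{ij}=f$ on $\Omega$ does not give a two-sided bound on $w$ for general signed $f\in L^p$, which is exactly why the earlier works needed $f\le0$ or high integrability of $f^+$. This is the gap you would need to fill.
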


\begin{rem}A typical function $G$ satisfying (A1) and (A3) is $G(d) = \frac{d^{\theta}-1}{\theta}$ where $\theta<1/n$.
This example satisfies (A2) or (B2) only when $0\leq \theta<1/n$. 
\end{rem}

\begin{rem} \label{ABrem} The conditions (A1), (B2) and (A3) are weaker than (A1)-(A3). More precisely:
\begin{myindentpar}{1cm} 
 (a) Suppose that $G$ satisfies (A1)-(A3). Then,  (A1), (B2) and (A3) hold. In fact,
 $$G(d)- dG'(d)\geq G(1)- w(1) + c(1-\frac{1}{n}) \log d~\text{for all}~d\geq 1. $$
 (b) The following function satisfies (A1), (B2) and (A3) but does not satisfy (A1)-(A3):
 $$G(d) =\frac{\log d}{\log \log (d+ e^{e^{4n}})}.$$
 \end{myindentpar}
\end{rem}

Remark \ref{ABrem} (a) tells us that the functions $G$ satisfying (A1)-(A3) grow at least logarithmically, and thus belong roughly to the regime 
of $\frac{d^{\theta}-1}{\theta}$ where $0\leq \theta<1/n$ because 
(A1) also implies that $(wd^{1-1/n})'\leq 0$, which means that, after integrating twice, $G(d)\leq Cd^{1/n}$ for $d\geq 1$. 

On the other hand, the example in Remark \ref{ABrem} (b) says that Theorem \ref{mainthm} also covers the case of functions $G$ below 
the threshold of Abreu's equation where $G(d)=\log d$.

We briefly comment on the roles of conditions (A1), (B2) and (A3) in Theorem \ref{keythm}. (A1) guarantees the concavity of the 
functional $J$; (A3) gives the upper bound for $w$ while (B2) gives the upper bound for the dual $w^{*}$ of $w$ via the Legendre transform. As will be seen 
later in Lemma \ref{Legeq}, $w^{*}= G(\det D^2 u)- (\det D^2 u) G'(\det D^2 u).$ 
\subsection{Optimality of the assumptions}

We remark that the global $W^{4,p}$ estimate (\ref{global-est}) fails when
$f$ has integrability less than the dimension $n$ or the coercivity condition (B2) is not satisfied. 
\begin{rem} \label{smallpt}
Consider the second boundary value problem (\ref{AMCE})-(\ref{SBV}) on the unit ball centered at the origin $\Omega=B_1\subset \R^n$ with $G(d)=\frac{d^{\theta}-1}{\theta}$ where 
$\theta<1/n$.
\begin{myindentpar}{1cm}
(i) If $n/2< p<n$ then the function $u(x) =|x|^{1 + \frac{n-p}{2(3p-n)}}$
is a solution to (\ref{AMCE})-(\ref{SBV}) with $\theta = \frac{1}{2n}(\frac{n}{p}-1)$,
$\varphi$ and $\psi$ positive
 constants on $\p\Omega$, $f\in L^p(
 \Omega)$ 
 but
 $u\not\in W^{4, p}(\Omega)$.\\
 (ii) If $\theta<-1/n$, then $u(x) =|x|^{\frac{2n\theta}{n\theta-1}}$ is a solution to (\ref{AMCE})-(\ref{SBV}) with $\varphi$ and $\psi$ positive
 constants on $\p\Omega$ and $f$ a positive constant but
 $u\not\in W^{4, n/2}(\Omega)$.\\
 (iii) If $-1/n\leq \theta<0$, then $u(x) =|x|^{\frac{4-5n\theta}{4(1-n\theta)}}$ is a solution to 
 (\ref{AMCE})-(\ref{SBV}) with $\varphi$ and $\psi$ positive
 constants on $\p\Omega$ and $f\in L^{\frac{n}{1+n\theta/2}}(\Omega)$ but
 $u\not\in W^{4, n/2}(\Omega)$.
 
 \end{myindentpar}
\end{rem}
The situation in Remark \ref{smallpt} (ii) is similar to the 
case $\theta>1$ considered in \cite{TW02}. For
$n=2$, $\theta=2$ and $G(d) = \frac{d^{\theta}-1}{\theta}$, Trudinger-Wang \cite{TW02} 
constructed a radial function $u$, not $C^3$ smooth, with $U^{ij}w_{ij}$ being a positive constant. In the examples in Remark \ref{smallpt}, $w=G'(\det D^2u)$ vanishes somewhere in the interior of $\Omega$. It turns out that solutions 
to the second boundary value problem (\ref{AMCE})-(\ref{SBV}) 
are well-behaved near the boundary even if the coercivity condition (B2) fails. 
\begin{rem} 
\label{nearb} Let $G(d)=\frac{d^{\theta}-1}{\theta}$ with $\theta< 1/n.$
Let $p> n$ and let $\Omega$ be a bounded, uniformly convex 
domain in $\R^{n}$ with $\p\Omega\in C^{3,1}$. Suppose that $\varphi \in W^{4,p}(\Omega), \psi\in W^{2,p}(\Omega)$, $\inf_{\Omega}\psi>0$ and $f\in L^{p}(\Omega)$. Then, for any
uniformly convex solution $u\in C^{4}(\overline{\Omega})$ of (\ref{AMCE})-(\ref{SBV}), we have an a priori $W^{4,p}$ estimate near the boundary
\begin{equation*}
\norm{u}_{W^{4,p}(\Omega_{\delta})}\leq C,~\text{with}~\Omega_{\delta}:= \{x\in\Omega: \text{dist}(x,\p\Omega)
\leq \delta\},
\end{equation*}
where $C$ and $\delta$ depend on $n, p, \theta, \Omega$, $\norm{f}_{L^{p}(\Omega)}$, $\norm{\varphi}_{W^{4,p}(\Omega)}, \norm{\psi}_{W^{2,p}(\Omega)}$, and $\inf_{\Omega} \psi.$
\end{rem}

On the other hand, when $G(d) = \frac{d^{\theta}-1}{\theta}$ with $\theta<0$, that is the coercivity condition (B2) fails, but $\|f^{+}\|_{L^n(\Omega)}$ is small, then we still 
obtain a unique uniformly convex $W^{4, p}$ solution 
to the second boundary value problem (\ref{AMCE})-(\ref{SBV}). This is a 
slight improvement of \cite{TW05, TW08} where the smallness was taken in the $L^{\infty}$ norm.
\begin{rem} \label{Lnsmall}
Fix $p>n$ and let $G(d)=\frac{d^{\theta}-1}{\theta}$ with $\theta<1/n$.  Let $\Omega$ be a bounded, uniformly convex domain in $\mathbb{R}^n$ with $\partial \Omega \in C^{3, 1}$. 
Suppose $f \in L^{p}(\Omega)$, $\varphi \in W^{4,p}(\Omega)$, $\psi \in W^{2,p}(\Omega)$ with $\inf_{\Omega}\psi>0$.    
If $\|f^{+}\|_{L^n(\Omega)}$ is small, then there exists  a unique uniformly convex solution $u \in W^{4, p}(\Omega)$ to (\ref{AMCE})-(\ref{SBV}).
\end{rem}

\subsection{On the proof of global $W^{4,p}$ estimates}

In additions to global regularity results for the Monge-Amp\`ere \cite{TW08} and linearized Monge-Amp\`ere equations \cite{Le}, a key step to global $W^{4,p}$ estimates for (\ref{AMCE})-(\ref{SBV}) is to prove a uniform upper bound for the Hessian determinant $\det D^2 u$.

The condition $f\leq 0$ in \cite{TW08, Le} allowed the use of classical maximum principle to 
(\ref{AMCE}) to conclude that $w$ has a positive lower bound, and hence,  $\det D^2 u$ has a uniform upper bound.

When $f$ is not assumed to be non-positive, obtaining a uniform upper bound for $\det D^2 u$ becomes trickier. In \cite{CW}, with (A1)-(A3), by using Trudinger-Wang \cite{TW08} solution to (\ref{AMCE})-(\ref{SBV}) 
for $f\equiv 0$, Chau-Weinkove first obtained
a uniform global bound on $u$ by using very interesting geometric arguments involving the Gauss curvature of the boundary. From this, together with (A2), they obtained a uniform lower bound for
$w$ using either classical maximum principle or delicate Aleksandrov-Bakelman-Pucci (ABP) type argument to certain second order differential inequalities after performing some
(not sharp) pointwise
estimates. High integrability of $f^{+}$ was required in the process.

The key insight of this paper is that an application of ABP estimate to the  dual equation of (\ref{AMCE}) via the Legendre transform 
gives a uniform upper bound for $\det D^2 u$ provided we have the coercivity condition (B2) and a global gradient bound for $u$; see Lemma \ref{boundd}. We prove the latter by uniformly 
bounding $u$ globally and its Hessian determinant $\det D^2 u$ near the boundary. These estimates are derived as follows.

First, we give a direct proof of the global a priori bound on $u$ in Lemma \ref{udet}, assuming only (A1), without resorting to any previous results regarding to solvability
of (\ref{AMCE})-(\ref{SBV}). The proof is inspired by \cite[Lemma 2.2]{CW} but requires less regularity on the boundary data $\varphi$ and $\psi$ and gives explicit
bounds. It is
based on testing against smooth concave functions $\hat u$ and convex functions $\tilde u$ having generalized affine 
mean curvature $L[\tilde u]$  bounded in $L^1$ (Lemma \ref{geoc}). This proof is of independent interest and 
can be potentially applied to other problems concerning fourth order equations. By (A3), we 
have a uniform lower bound for 
$\det D^2 u$ (Lemma \ref{upwlem}). 
Next, by using our previous boundary H\"older estimates for second-order equations with lower bound on the determinant of the coefficient matrix \cite{Le} to $U^{ij}w_{ij}=f$, 
we obtain a uniform bound for $\det D^2 u$ near the boundary. This, together the global bound on $u$, 
allows us to construct barriers using the strict convexity of $\p\Omega$ to obtain the global gradient bound for $u$;
see Lemma \ref{Dubound}. 

\section{Proof of the main theorem}

In this section, we give the proof of the main result of the paper, Theorem \ref{mainthm}. As mentioned in the Introduction, to prove Theorem \ref{mainthm}, it suffices to prove Theorem \ref{keythm}.

Let $p> n$ and let $\Omega$ be a bounded, uniformly convex 
domain in $\R^{n}$ with $\p\Omega\in C^{3,1}$. Assume $\varphi \in W^{4,p}(\Omega), \psi\in W^{2,p}(\Omega)$, $\inf_{\Omega}\psi>0$ and $f\in L^{p}(\Omega)$. 
Suppose a uniformly convex solution $u\in C^{4}(\overline{\Omega})$ solves (\ref{AMCE})-(\ref{SBV}).

We denote by $C, C', C_1, C_2, c, c_1$, etc,  {\it universal constants} that may change from line to line. Unless stated otherwise, they depend only 
on $n, p, G, \Omega$, $\norm{f}_{L^{p}(\Omega)}$, $\norm{\varphi}_{W^{4,p}(\Omega)}, \norm{\psi}_{W^{2,p}(\Omega)}$, and $\inf_{\Omega} \psi.$

Our basic geometric construction is the following:
\begin{lem}\label{geoc} There exist a convex function $\tilde u\in W^{4,p}(\Omega)$ and a concave function $\hat u\in W^{4,p}(\Omega)$
 with the following properties:
 \begin{myindentpar}{1cm}
  (i) $\tilde u=\hat u=\varphi$ on $\p\Omega$,\\
  (ii) $$
\| \tilde{u} \|_{C^{3}(\ov{\Omega})} + \| \hat{u} \|_{C^{3}(\ov{\Omega})} +
\|\tilde u\|_{W^{4, p}(\Omega)} + \|\hat u\|_{W^{4, p}(\Omega)}\leq C,\quad \textrm{and } \det D^2\tilde{u} \ge C^{-1}>0,
$$
(iii) letting $\tilde{w}=G'(\det D^2 \tilde{u})$, and denoting by $(\tilde{U}^{ij})$ the 
cofactor matrix of $(\tilde{u}_{ij})$, then  the generalized affine mean curvature of the graph of $\tilde u$ is uniformly bounded in $L^p$, that is $$\left\|\tilde U^{ij}\tilde w_{ij}\right\|_{L^p(\Omega)}\leq C,$$
 \end{myindentpar}
where $C$ depends only on $n$, $p$, $\Omega$, 
$G$, and $\| \varphi\|_{W^{4, p}(\Omega)}$.
 
\end{lem}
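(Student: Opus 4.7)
The plan is to construct both $\tilde u$ and $\hat u$ as explicit perturbations of $\varphi$ by a large multiple of a fixed uniformly convex defining function of $\Omega$. As a preparatory step, I would fix a function $\rho\in C^{3,\alpha}(\overline\Omega)$ with $\rho=0$ on $\partial\Omega$, $\rho<0$ in $\Omega$, and $D^2\rho\ge c_0 I$ for some $c_0>0$ depending only on $\Omega$. Such a $\rho$ exists because $\Omega$ is uniformly convex with $\partial\Omega\in C^{3,1}$; one concrete choice is the classical solution of $\det D^2\rho=1$ in $\Omega$ with zero boundary data, whose $C^{3,\alpha}(\overline\Omega)$ regularity and matching two-sided Hessian bounds follow from the Caffarelli--Nirenberg--Spruck boundary theory for the Monge--Amp\`ere equation (the upper bound on $D^2\rho$ combined with $\det D^2\rho=1$ forces each eigenvalue to be bounded from below).

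With $\rho$ so chosen, I would set $\tilde u=\varphi+A\rho$ and $\hat u=\varphi-A\rho$, taking $A>0$ large enough that $Ac_0\ge 1+\|D^2\varphi\|_{L^\infty(\Omega)}$; note that the Sobolev embedding $W^{4,p}\hookrightarrow C^{3,\alpha}$ (valid because $p>n$) makes this $L^\infty$ norm finite and controlled by $\|\varphi\|_{W^{4,p}}$. Since $\rho=0$ on $\partial\Omega$, both $\tilde u$ and $\hat u$ agree with $\varphi$ there, proving (i). Moreover $D^2\tilde u=D^2\varphi+AD^2\rho\ge I$, so $\tilde u$ is uniformly convex with $\det D^2\tilde u\ge 1$, while $D^2\hat u\le -I$, so $\hat u$ is (uniformly) concave. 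The $C^3$ and $W^{4,p}$ bounds in (ii) then follow by the triangle inequality from the fixed bounds on $\rho$ and the hypothesis $\varphi\in W^{4,p}(\Omega)$.

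For (iii), I would differentiate $\tilde w=G'(\det D^2\tilde u)$ twice using the standard identity $\partial_i\det D^2\tilde u=\tilde U^{kl}\tilde u_{kli}$, obtaining
\[
\tilde w_{ij}=G'''(\det D^2\tilde u)\bigl(\tilde U^{kl}\tilde u_{kli}\bigr)\bigl(\tilde U^{mn}\tilde u_{mnj}\bigr)+G''(\det D^2\tilde u)\bigl[(\partial_j\tilde U^{kl})\,\tilde u_{kli}+\tilde U^{kl}\tilde u_{klij}\bigr].
\]
Since $\det D^2\tilde u$ is confined to a compact subinterval of $(0,\infty)$, the smoothness of $G$ yields uniform bounds on $G',G'',G'''$ along this graph. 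Because $\tilde u\in C^{3,\alpha}(\overline\Omega)$, the third derivatives $\tilde u_{kli}$ and the cofactor entries $\tilde U^{kl}$ (together with their first derivatives) are bounded in $L^\infty$, so the only factor that fails to be $L^\infty$ is the single fourth derivative $\tilde u_{klij}$, which belongs to $L^p(\Omega)$ with norm $\lesssim\|\varphi\|_{W^{4,p}}+A\|\rho\|_{W^{4,p}}$. A final contraction with the bounded $\tilde U^{ij}$ gives $\tilde U^{ij}\tilde w_{ij}\in L^p(\Omega)$ with a bound depending only on $n,p,\Omega,G,\|\varphi\|_{W^{4,p}}$.

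I expect the main obstacle to be the preliminary construction of the uniformly convex defining function $\rho$ with a two-sided Hessian bound; once that ingredient is in hand, every remaining verification reduces to bookkeeping of routine product estimates using Sobolev embedding and the smoothness of $G$ on compact subsets of $(0,\infty)$.
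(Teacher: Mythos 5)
Your approach is essentially the same as the paper's: perturb $\varphi$ by a large multiple of a uniformly convex defining function $\rho$ of $\Omega$, take $\tilde u$ convex and $\hat u$ concave, read off (i)--(ii) from the perturbation, and obtain (iii) from the chain-rule expansion of $\tilde w_{ij}$. The paper uses $\mu(e^\rho-1)$ where you use $A\rho$; since $\rho$ is already assumed uniformly convex this is a cosmetic difference, and both yield the same bounds.

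One point needs tightening. You declare $\rho\in C^{3,\alpha}(\overline\Omega)$, but in the $L^p$ estimate for $\tilde u_{klij}$ you invoke $\|\rho\|_{W^{4,p}(\Omega)}$, and $C^{3,\alpha}$ with $\alpha<1$ gives no control on fourth derivatives. For (ii) and (iii) to hold you must produce $\rho\in W^{4,p}(\Omega)$; the natural target, given $\partial\Omega\in C^{3,1}$, is $\rho\in C^{3,1}(\overline\Omega)=W^{4,\infty}(\overline\Omega)\subset W^{4,p}(\Omega)$. Your proposed route via $\det D^2\rho=1$ and the Caffarelli--Nirenberg--Spruck boundary theory is heavier than the lemma needs, and (as you note) it delivers only $C^{3,\alpha}(\overline\Omega)$ regularity up to the boundary, so it does not actually close this gap without extra work. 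Replacing it with the standard direct construction of a $C^{3,1}$ uniformly convex defining function from the $C^{3,1}$, uniformly convex boundary (which is what the paper implicitly uses) fixes the issue, and then the rest of your argument is correct.
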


\begin{proof} 
Let $\rho$ be a strictly convex defining function of $\Omega$, that is 
$\Omega:=\{x\in \R^n: \rho(x)<0\}$, $\rho=0$ on $\p\Omega$ and $D\rho\neq 0$ on $\p\Omega$.
Then $$D^2 \rho\geq \eta I_n~ \text{and} ~\rho\geq -\eta^{-1} ~\text{in}~ \Omega$$ for some $\eta>0$ depending only on $\Omega$. 
Consider the following functions
$$\tilde u(x) =\varphi(x) + \mu (e^{\rho}-1), \hat{u}= \varphi(x) - \mu (e^{\rho}-1).$$
Then $\tilde u, \hat u \in W^{4,p}(\Omega)$. 
From
$$D^2 (e^{\rho}-1)=e^{\rho}(D^2\rho + D\rho \otimes D\rho)\geq e^{-\eta^{-1}}\eta I_n,$$
we find that for a fixed but sufficiently large $\mu$ (depending only on $n, p, \Omega$ and $\|\varphi\|_{W^{4, p}(\Omega)}$), $\tilde u$ is convex while $\hat u$ is concave and; moreover, recalling $p>n$, (i) and (ii) are satisfied. 
From (ii), the 
smoothness of $G$ and 
$$\tilde w_{ij}= G'''(\det D^2\tilde u) \tilde U^{kl}\tilde U^{rs} \tilde u_{kli}\tilde u_{rsj} + G''(\det D^2\tilde u) \tilde U^{kl}\tilde u_{klij}
+ G''(\det D^2\tilde u) \tilde U^{kl}_j \tilde u_{kli},$$
we easily obtain (iii).
\end{proof}

The following lemma gives a uniform $L^1$ bound on the Hessian determinant $\det D^2 u$ and as a consequence, a uniform bound on $u$. 
\begin{lem}\label{udet} Assuming (A1), we have 

$$(i)~~\int_{\Omega} \det D^2 u \le C,~\text{and}~
(ii)~ \sup_{\Omega} |u| \le C,$$
where $C$ depends only on $n$, $p$, $\Omega$, 
$G$, $\| f\|_{L^1(\Omega)}$, $\| \varphi\|_{W^{4, p}(\Omega)}$, $\| \psi\|_{L^{\infty}(\ov{\Omega})}$ and $\inf_{\partial \Omega}\psi$.

\end{lem}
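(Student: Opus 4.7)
My plan is to test the PDE $U^{ij}w_{ij}=f$ against suitable combinations of the auxiliary functions $\tilde u,\hat u$ from Lemma~\ref{geoc}, exploiting the divergence-free property $\partial_iU^{ij}=0$ of the cofactor matrix. The upper bound $\sup_\Omega u\le C$ is immediate: since $u-\hat u$ is convex (convex minus concave) and vanishes on $\partial\Omega$, the maximum principle gives $u\le\hat u\le\|\hat u\|_{L^{\infty}(\Omega)}\le C$.

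For (i), the heart of the argument is the integration-by-parts identity obtained by applying $\partial_iU^{ij}=0$ twice: for any $h\in C^{2}(\overline\Omega)$ with $h|_{\partial\Omega}=0$,
\[
\int_\Omega hf\,dx=\int_\Omega w\,U^{ij}h_{ij}\,dx-\int_{\partial\Omega}\partial_\nu h\cdot\psi\,U^{ij}\nu_i\nu_j\,dS.
\]
Applied to $h=\hat u-u\ge0$, the matrix AM--GM inequality together with the uniform concavity $-D^{2}\hat u\ge c_{0}I$ yields
\[
-U^{ij}(\hat u-u)_{ij}\ge c_{0}n(\det D^{2}u)^{(n-1)/n}+n\det D^{2}u,
\]
while $\partial_\nu(\hat u-u)\le0$ on $\partial\Omega$ gives the boundary term a favorable sign. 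Rearranging produces an estimate of the form
\[
n\!\int_\Omega\!w\det D^{2}u+c_{0}n\!\int_\Omega\!w(\det D^{2}u)^{(n-1)/n}\le\|f\|_{L^{1}}\|\hat u-u\|_{L^{\infty}}+\!\int_{\partial\Omega}\!(\partial_\nu u-\partial_\nu\hat u)\psi\,U^{ij}\nu_i\nu_j\,dS.
\]
The remaining boundary integral is where Lemma~\ref{geoc}(iii) enters: applying the same testing procedure to the auxiliary equation $\tilde U^{ij}\tilde w_{ij}=:\tilde f$ satisfied by $\tilde u$ (with $\|\tilde f\|_{L^{1}(\Omega)}\le C$) against $u-\tilde u$ produces a matching boundary term that, after subtraction, absorbs the unknown $\partial_\nu u$-contribution modulo quantities controlled by the prescribed data and by $\|u\|_{L^{\infty}}$.

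To close the argument I invoke Aleksandrov's maximum principle $-\inf_\Omega u\le-\inf_{\partial\Omega}\varphi+C(\int_\Omega\det D^{2}u)^{1/n}$, which converts an $L^{1}$-bound on $\det D^{2}u$ into a bound on $\|u\|_{L^{\infty}}$. Since $\|u\|_{L^{\infty}}$ appears on the right side of the key estimate only linearly, while the left side controls $(\int\det D^{2}u)^{1/n}$ with exponent $1/n<1$, the resulting inequality is self-absorbing and yields both (i) and (ii). The main obstacle I anticipate is precisely the control of the boundary flux $\int_{\partial\Omega}\partial_\nu u\cdot\psi\,U^{ij}\nu_i\nu_j\,dS$: convexity of $u$ together with the uniform convexity of $\partial\Omega$ only provides a lower bound on $\partial_\nu u$ (through $D^{2}_{T}\varphi+\partial_\nu u\cdot\mathrm{II}\ge0$), so the upper control of this integrated flux must be extracted entirely from the cancellation with the $L^{1}$-bounded datum $\tilde f$ furnished by Lemma~\ref{geoc}.
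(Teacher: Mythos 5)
Your proposal contains a genuine gap in the core estimate, and a second issue with the claimed boundary cancellation; both are fatal to the argument as written.

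\textbf{The coefficient $w$ contaminates the left-hand side.} Testing $U^{ij}w_{ij}=f$ against $h=\hat u-u$ produces, as you correctly derive,
\[
\int_\Omega w\bigl(-U^{ij}h_{ij}\bigr)\,dx \;=\; -\int_\Omega hf\,dx \;+\;\int_{\partial\Omega}(\partial_\nu u-\partial_\nu\hat u)\,\psi\,U^{\nu\nu}\,dS,
\]
and your pointwise bound $-U^{ij}h_{ij}\ge c_0 n(\det D^2u)^{(n-1)/n}+n\det D^2u$ is fine. But the left side is $\int_\Omega w\,\det D^2u$, not $\int_\Omega\det D^2u$, and under (A1) alone one only knows $w(d)\,d^{1-1/n}$ is nonincreasing, so $w(d)\le C\,d^{1/n-1}$ for $d\ge1$, hence $w\,\det D^2u\le C(\det D^2u)^{1/n}$. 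Thus even if you could bound the right-hand side, you would control only $\int(\det D^2u)^{1/n}$, which is strictly weaker than the claim. The paper sidesteps this entirely by bounding $\int_\Omega\det D^2u$ through the $w$-free identity
\[
n\int_\Omega\det D^2u\;\le\;\int_\Omega U^{ij}(u-\hat u)_{ij}\,dx\;=\;\int_{\partial\Omega}U^{\nu\nu}(u_\nu-\hat u_\nu)\,dS,
\]
using only that $(U^{ij})$ is divergence-free and $U^{ij}\hat u_{ij}\le0$; the PDE is never tested against $\hat u$ and no factor of $w$ appears.

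\textbf{The boundary flux is on the wrong side, and the proposed cancellation cannot occur.} In your inequality the term $\int_{\partial\Omega}(\partial_\nu u-\partial_\nu\hat u)\,\psi\,U^{\nu\nu}\,dS$ is nonnegative and sits on the right-hand side; since $U^{\nu\nu}\sim K\,(u_\nu)^{n-1}$ on $\partial\Omega$, this is a term of size $\int_{\partial\Omega}(u_\nu^+)^n$ that you would need to absorb, not a ``favorable'' one. You propose to cancel it against the boundary flux from testing $\tilde U^{ij}\tilde w_{ij}=\tilde f$ against $u-\tilde u$, but that flux is $\int_{\partial\Omega}\partial_\nu(u-\tilde u)\,\tilde w\,\tilde U^{\nu\nu}\,dS$, and both $\tilde w$ and $\tilde U^{\nu\nu}$ are uniformly bounded by Lemma~\ref{geoc}; it grows only like $\int_{\partial\Omega}u_\nu^+$, linearly rather than to the $n$-th power, so no cancellation is possible. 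What the paper uses instead is the concavity of the map $M\mapsto G(\det M)$ (equivalently, of $\tilde G(d)=G(d^n)$ under (A1)), which, after symmetrizing between $u$ and $\tilde u$, produces a one-sided inequality in which $\int_{\partial\Omega}\psi\,U^{\nu\nu}(u_\nu-\tilde u_\nu)\,dS$ appears with the favorable sign, i.e.\ on the \emph{left} of a bound by universal constants. Your argument never invokes this functional concavity, only the pointwise concavity of $\hat u$, and that is not enough to flip the sign.

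The self-absorption loop via Aleksandrov's maximum principle at the end, and the easy upper bound $u\le\hat u$ from convexity, are both correct and in the spirit of the paper, but the two issues above leave a gap in the central step.
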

\begin{rem}
Lemma \ref{udet} (ii) was proved in \cite[Lemma 2.2]{CW} under (A1)-(A3) using the result of Trudinger-Wang \cite{TW08} on the solvability of (\ref{AMCE})-(\ref{SBV})
in $W^{4,p}(\Omega)$ when $f\equiv 0$. The constant
$C$ in \cite{CW} depends on $n$, $p$, $G$, $\Omega$, $\| f\|_{L^1(\Omega)}$, $\| \varphi\|_{C^{3, 1}(\overline{\Omega})}$, 
$\| \psi\|_{C^{1, 1}(\ov{\Omega})}$ and $\inf_{\partial \Omega}\psi$.
Our proof here is self-contained and in fact gives a stronger bound (i). Moreover, from the proofs of Lemmas \ref{geoc} and \ref{udet}, we find that the constant $C$ can be made explicit.
\end{rem}
\begin{proof}[Proof of Lemma \ref{udet}]  
Let $\tilde u$ be as in Lemma \ref{geoc}. Set $\tilde f= \tilde U^{ij}\tilde w_{ij}$.
The assumption (A1) implies that the function $\tilde G(d):= G(d^n)$ is concave because
$$\tilde G^{''}(d)= n^2 d^{n-2} \left[w'(d^n) d^n + (1-\frac{1}{n})w(d^n)\right]\leq 0.$$
Using this, $G'>0$, and the concavity of the map $M\longmapsto (\det M)^{1/n}$ in the space of symmetric matrices $M\geq 0$, we obtain
\begin{eqnarray*}
 \tilde G((\det D^2 \tilde u)^{1/n}) -\tilde G((\det D^2  u)^{1/n})&\leq& \tilde G^{'}((\det D^2 u)^{1/n})((\det D^2 \tilde u)^{1/n}-(\det D^2 u)^{1/n})\\
 &\leq & \tilde G^{'}((\det D^2 u)^{1/n})\frac{1}{n} (\det D^2 u)^{1/n-1} U^{ij} (\tilde u-u)_{ij} .
\end{eqnarray*}
Since $\tilde G^{'}((\det D^2 u)^{1/n}) = n G^{'}(\det D^2 u) (\det D^2 u)^{\frac{n-1}{n}}$, we rewrite the above inequalities as
$$G(\det D^2 \tilde u)- G(\det D^2 u)\leq wU^{ij}(\tilde u- u)_{ij}.$$
Similarly,
$$G(\det D^2 u)- G(\det D^2 \tilde u)\leq \tilde w \tilde U^{ij}(u- \tilde u)_{ij}.$$
Adding, integrating by parts twice and using the fact that $(U^{ij})$ is divergence free, we obtain
\begin{eqnarray*}
 0&\leq& \int_{\Omega} wU^{ij}(\tilde u- u)_{ij} + \tilde w \tilde U^{ij}(u- \tilde u)_{ij}\\
 &=& \int_{\partial \Omega} \psi U^{ij} (\tilde{u}_j - u_j) \nu_i + \int_{\Omega} f(\tilde{u}-u) + 
 \int_{\partial \Omega} \tilde w \tilde{U}^{ij} (u_j - \tilde{u}_j) \nu_i + \int_{\Omega} \tilde f (u-\tilde{u}) . 
\end{eqnarray*}
Here $\nu= (\nu_{1},\cdots,\nu_n)$ is the unit outer normal vector field on $\p\Omega$. It follows that
\begin{eqnarray} \label{key1}
\int_{\Omega} (f- \tilde f)u + \int_{\partial \Omega} \left(\psi U^{ij} (u_j -\tilde{u}_j ) \nu_i  +
 \tilde w \tilde{U}^{ij} (\tilde{u}_j - u_j) \nu_i\right) &\le& \int_{\Omega} (f-\tilde f) \tilde u\nonumber\\&\leq& 
(\|f\|_{L^1(\Omega)} + \|\tilde f\|_{L^1(\Omega)})\|\tilde u\|_{L^{\infty}(\Omega)}\leq 
C.
\end{eqnarray}
Let us analyze the boundary terms in (\ref{key1}). Since $u-\tilde u=0$ on $\p\Omega$, we have $(u-\tilde u)_j= (u-\tilde u)_{\nu} \nu_j$, and hence
$$U^{ij}(u-\tilde u)_j \nu_i= U^{ij}\nu_j \nu_i (u-\tilde u)_{\nu} = U^{\nu\nu}(u-\tilde u)_{\nu}\equiv (\det D^2_{x'} u)(u-\tilde u)_{\nu},$$
with $x'\perp \nu$ denoting the tangential directions along $\p\Omega$. Therefore, 

\begin{equation} \label{nunu}
U^{ij} (u_j - \tilde{u}_j) \nu_i = U^{\nu\nu} (u_{\nu} - \tilde{u}_{\nu}), \quad \tilde{U}^{ij} (\tilde{u}_j - u_j) \nu_i = \tilde{U}^{\nu\nu} (\tilde{u}_{\nu} - u_{\nu}).
\end{equation}

On the other hand, from $u-\varphi=0$ on $\p\Omega$, we have, with respect to a principle coordinate system at any point $y\in\p\Omega$ (see, e.g., 
\cite[formula (14.95) in \S 14.6]{GT})
$$D_{ij}(u-\varphi)= (u-\varphi)_{\nu}\kappa_i\delta_{ij}, i, j=1, \cdots, n-1,$$
where $\kappa_1,\cdots,\kappa_{n-1}$ denote the principle curvatures of $\p\Omega$ at $y$. Let $K=\kappa_1\cdots\kappa_{n-1}$ be the Gauss curvature of $\partial \Omega$ at
$y\in\p\Omega$. Then, at any $y\in\Omega$, by noting that 
$\det D^2_{x'} u =\det (D_{ij}u)_{1\leq i, j\leq n-1}$ and taking the determinants of
$$D_{ij} u = u_{\nu}\kappa_i\delta_{ij}-\varphi_{\nu}\kappa_i\delta_{ij} + D_{ij}\varphi,$$
we obtain, with $u_{\nu}^+ =\max (0, u_{\nu})$,
\begin{equation} \label{Gaussc}
U^{\nu\nu}  = K (u_{\nu})^{n-1} + E, \quad \textrm{where } |E| \le C (1+ |u_{\nu}|^{n-2}) \leq C(1 + (u_{\nu}^+)^{n-2}).
\end{equation}
In the last inequality of (\ref{Gaussc}), we used the following fact which is due to the convexity of $u$:
\begin{equation}
 \label{lowerunu} u_{\nu}\geq -\|D\varphi\|_{L^{\infty}(\Omega)}.
\end{equation}

Now, let $\hat u$ be as in Lemma \ref{geoc}. Integrating by parts twice, and using (\ref{nunu}), we find that
\begin{equation}\label{hateq}\int_{\Omega} U^{ij} (u-\hat u)_{ij}= \int_{\p\Omega} U^{ij} (u-\hat u)_{i}\nu_j= \int_{\p\Omega} U^{\nu\nu} (u_{\nu}-\hat u_{\nu}).
\end{equation}
By Lemma \ref{geoc}, $\hat u_{\nu}$ is bounded by a universal constant. 
The concavity of $\hat u$ gives $U^{ij} \hat u_{ij}\leq 0$. Thus,
using $U^{ij} u_{ij}= n\det D^2 u$, we obtain from (\ref{Gaussc})-(\ref{hateq}) the following estimates
\begin{equation}\int_{\Omega} \det D^2 u \leq  \int_{\p\Omega} U^{\nu\nu} (u_{\nu}-\hat u_{\nu}) \leq
C + C
\int_{\p\Omega}  (u_{\nu}^+)^n .
 \label{l1det}
\end{equation}
The Aleksandrov's maximum principle (see \cite[Lemma 9.2]{GT}) then gives

\begin{equation}\label{umax}\|u\|_{L^{\infty}(\Omega)} \leq \|\varphi\|_{L^{\infty}(\p\Omega)} + C(n) \text{diam}(\Omega)\left(\int_{\Omega} \det D^2 u\right)^{1/n} \leq C + C\left(
\int_{\p\Omega}  (u_{\nu}^+)^n \right)^{1/n}.
\end{equation}

By Lemma \ref{geoc}, $\tilde{u}_{\nu}, \tilde w$, $\tilde{U}^{\nu\nu}$ and $\|\tilde f\|_{L^1(\Omega)}$ are uniformly bounded. Taking (\ref{key1})-(\ref{Gaussc}) 
and (\ref{umax}) into account, we obtain
\begin{eqnarray*} 
 \int_{\partial \Omega} K \psi (u_{\nu}^+)^n  &\le& C + C \int_{\partial \Omega} (u_{\nu}^+)^{n-1} - \int_{\Omega} (f - \tilde f)u\\
 &\leq& C + C \int_{\partial \Omega} (u_{\nu}^+)^{n-1} + C\left(
\int_{\p\Omega}  (u_{\nu}^+)^n \right)^{1/n}.
\end{eqnarray*} 
From H\"older inequality, $n\geq 2$ and the fact that $K\psi$ has a positive lower bound, we easily obtain
$$\int_{\partial \Omega} (u_{\nu}^+)^n \le C,$$
from which the claimed uniform bound for $u$ in (i) follows by (\ref{umax}).
Recalling (\ref{l1det}), we obtain 
the desired bound for the $L^1$ norm of $\det D^2 u$ stated in (i).
\end{proof}
The next lemma gives a uniform lower bound on the Hessian determinant $\det D^2 u$. 
\begin{lem} (\cite[Lemma 2.3]{CW}, \cite[Lemma 3.1]{Le}) \label{upwlem} Assume (A3) is satisfied. 
Then, there exists a constant $C>0$ depending 
only on $n, p, G, \Omega$, $\norm{f}_{L^{n}(\Omega)}$ and $\norm{\psi}_{W^{2,p}(\Omega)}$ 
such that 
\begin{equation*}
w\leq C,~\text{and}~ \det D^2 u\geq C^{-1}.
\end{equation*}
\end{lem}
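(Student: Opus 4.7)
\textbf{Plan for Lemma \ref{upwlem}.} The plan is to apply the Aleksandrov--Bakelman--Pucci estimate to the second-order linear equation $U^{ij}w_{ij}=f$ with Dirichlet data $w=\psi$, and then exploit (A3) to absorb the integrand into a small multiple of $\sup_{\Omega} w$. Since $G'$ is strictly decreasing (as $G$ is strictly concave and smooth), an upper bound on $w=G'(\det D^2 u)$ automatically yields the claimed lower bound $\det D^2 u\geq C^{-1}$; so it suffices to produce the $L^\infty$ bound on $w$.

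First, I would observe that the matrix $(U^{ij})$ is positive definite with $\det(U^{ij})=(\det D^2 u)^{n-1}$, so the standard ABP estimate applied to $U^{ij}w_{ij}=f$ gives
\begin{equation*}
\sup_{\Omega} w \;\leq\; \sup_{\partial\Omega}\psi \;+\; C(n)\,\mathrm{diam}(\Omega)\left\| \frac{f^{-}}{(\det D^2 u)^{(n-1)/n}}\right\|_{L^n(\Omega)}.
\end{equation*}
The boundary term is bounded by $\|\psi\|_{L^\infty(\partial\Omega)}\leq C\|\psi\|_{W^{2,p}(\Omega)}$ by Sobolev embedding since $p>n$. The main task is therefore to control the $L^n$ integrand, where the obstacle is that $(\det D^2 u)^{(n-1)/n}$ could be very small precisely where $w$ is large.

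Second, I would use (A3) in the following quantitative form: for any $\epsilon>0$, the condition $d^{1-1/n}w(d)\to\infty$ as $d\to 0$ furnishes a threshold $W_\epsilon$ (depending on $\epsilon$ and $G$) such that whenever $w(x)\geq W_\epsilon$, one has $(\det D^2 u(x))^{-(n-1)/n}\leq \epsilon\, w(x)$. Splitting the domain into $\{w\geq W_\epsilon\}$ and its complement gives
\begin{equation*}
\int_{\Omega}\frac{|f|^n}{(\det D^2 u)^{n-1}}\;\leq\; \epsilon^n\,(\sup_{\Omega}w)^n \|f\|_{L^n(\Omega)}^n \;+\; C_\epsilon\,\|f\|_{L^n(\Omega)}^n,
\end{equation*}
where $C_\epsilon$ is finite because on the complement $\det D^2 u$ is bounded below by $(G')^{-1}(W_\epsilon)>0$. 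Inserting this into the ABP bound yields
\begin{equation*}
\sup_{\Omega} w \;\leq\; C\|\psi\|_{W^{2,p}} \;+\; C\,\epsilon\,\|f\|_{L^n(\Omega)}\sup_{\Omega}w \;+\; C\,C_\epsilon^{1/n}\|f\|_{L^n(\Omega)}.
\end{equation*}

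Finally, I would choose $\epsilon$ small, depending only on $n$ and $\|f\|_{L^n(\Omega)}$, so that $C\epsilon\|f\|_{L^n(\Omega)}\leq 1/2$; this allows the $\sup_{\Omega}w$ term on the right to be absorbed, giving the required universal upper bound $w\leq C$. Inverting via the strictly decreasing map $G'$ then yields $\det D^2 u\geq (G')^{-1}(C)=: C^{-1}$, as claimed. The main conceptual step is the quantitative use of (A3): everything else is bookkeeping with ABP.
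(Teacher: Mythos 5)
Your proposal is correct and takes essentially the same approach as the paper: apply the ABP estimate to $U^{ij}w_{ij}=f$, note $\det(U^{ij})=(\det D^2 u)^{n-1}$, and use (A3) quantitatively to absorb a small multiple of $\sup_\Omega w$ into the left-hand side. The paper is terser (it factors out $\sup_\Omega d^{(1-n)/n}$ from the $L^n$ norm and invokes (A3) to conclude), while you unpack the absorption by splitting the domain according to the size of $w$; this is a presentational difference, not a different route.
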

The proof of Lemma \ref{upwlem} is short, so we include it here for reader's convenience.
\begin{proof} Let $d:=\det D^2 u$. Then $\det U = d^{n-1}.$ 
We apply Aleksandrov-Bakelman-Pucci's maximum principle (see e.g. \cite[Theorem 9.1]{GT}) to $U^{ij}w_{ij}=f$ in $\Omega$ with
$w=\psi$ on $\p\Omega$ to find that
\begin{equation}\label{Aleksandrov}\sup_{\Omega} w \leq \sup_{\partial \Omega} \psi +C \left\|\frac{f}{d^{(n-1)/n}}\right\|_{L^n(\Omega)}
\leq \sup_{\partial \Omega} \psi +C \left\| f\right \|_{L^n(\Omega)} \sup_{\Omega} (d^{(1-n)/n}),\\
\end{equation}
where $C$ depends only on $n$ and $\Omega$.  The desired upper bound on $w$ follows from \eqref{Aleksandrov} and  assumption (A3) on $G$.  The lower bound for $\det D^2 u=d$ then follows immediately.
\end{proof}
Now, we prove a key gradient bound for $u$.
\begin{lem}\label{Dubound}
 Assume (A1), and (A3) are satisfied. Then, there exists a constant $C>0$ depending 
only on $n, p, G, \Omega$, $\norm{f}_{L^{n}(\Omega)}$, $\|\varphi\|_{W^{4, p}(\Omega)}$, $\norm{\psi}_{W^{2,p}(\Omega)}$ and $\inf_{\p\Omega}\psi$
such that 
 $$\sup_{\Omega}|Du|\leq C.$$ 
\end{lem}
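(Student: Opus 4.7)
The plan is to reduce the global gradient bound to a bound for $|Du|$ on $\partial\Omega$, since the convexity of $u$ automatically promotes a boundary bound to a global one: for any unit vector $\xi$ and $x\in\Omega$, the map $t\mapsto u_\xi(x+t\xi)$ is nondecreasing, so $u_\xi(x)\le u_\xi(y)\le|Du(y)|$ at the exit point $y\in\partial\Omega$ of the ray $\{x+t\xi:t\ge 0\}$. The tangential component of $Du$ on $\partial\Omega$ is controlled by $\|\varphi\|_{C^1(\partial\Omega)}$, while (\ref{lowerunu}) already gives $u_\nu\ge-\|D\varphi\|_{L^\infty(\Omega)}$, so the whole task reduces to producing a uniform upper bound $u_\nu(x_0)\le C$ at every $x_0\in\partial\Omega$.

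\textbf{Step 1: $\det D^2 u\le M$ in a boundary layer.} View $w=G'(\det D^2 u)$ as the solution of the linear non-divergence equation $U^{ij}w_{ij}=f$ in $\Omega$ with boundary datum $w=\psi$. Lemma \ref{upwlem} yields $\det U=(\det D^2 u)^{n-1}\ge c_0>0$, while $f\in L^p(\Omega)$ with $p>n$ and $\psi\in W^{2,p}(\Omega)\hookrightarrow C^{0,\alpha}(\overline\Omega)$. The author's boundary H\"older estimate from \cite{Le} -- which requires only a positive lower bound on the determinant of the coefficient matrix -- then furnishes a universal modulus of continuity for $w$ up to $\partial\Omega$. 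Combined with $\psi\ge\inf_{\partial\Omega}\psi>0$, this provides a universal $\delta_0>0$ such that
$$w(x)\ge\tfrac12\inf_{\partial\Omega}\psi\quad\text{whenever }\operatorname{dist}(x,\partial\Omega)\le\delta_0.$$
Since $G'$ is strictly decreasing, this is equivalent to a universal upper bound $\det D^2 u\le M$ in the boundary layer $\Omega\cap\{\operatorname{dist}(\cdot,\partial\Omega)\le\delta_0\}$.

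\textbf{Step 2: Boundary gradient estimate via Monge--Amp\`ere barriers.} Together with $\det D^2 u\ge c_0$ (Lemma \ref{upwlem}) and $\|u\|_{L^\infty(\Omega)}\le C$ (Lemma \ref{udet}), Step 1 shows that $u$ is a convex function solving a Monge--Amp\`ere equation with two-sided bounded right-hand side in a fixed boundary layer, with boundary data $\varphi\in W^{4,p}(\Omega)\hookrightarrow C^{2,\alpha}(\overline\Omega)$ on the uniformly convex boundary $\partial\Omega$. Fixing $x_0\in\partial\Omega$, an exterior tangent ball of universal radius at $x_0$ (provided by uniform convexity of $\partial\Omega$) together with the two-sided control of $\det D^2 u$ and the bound on $\|u\|_{L^\infty}$ allows one to construct a convex lower barrier in a universal neighborhood of $x_0$ in $\overline\Omega$ that touches $u$ from below at $x_0$ and has universally bounded gradient. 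The Monge--Amp\`ere comparison principle then yields $u_\nu(x_0)\le C$. This is the classical Monge--Amp\`ere boundary gradient estimate in the spirit of Caffarelli--Nirenberg--Spruck, implemented for equations of the type (\ref{AMCE}) in \cite{TW08}.

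\textbf{Main obstacle.} The non-routine ingredient is Step 1: propagating the positive lower bound of $\psi$ from $\partial\Omega$ into a boundary layer of universal thickness for the linear equation $U^{ij}w_{ij}=f$, using \emph{only} a lower bound on $\det U$ and no pointwise two-sided eigenvalue control on $U$. This degenerate boundary regularity statement is precisely the content of the author's previous work \cite{Le}; without it the scheme would collapse because $\det D^2 u$ could conceivably blow up near the boundary. Once Step 1 is granted, Step 2 is the classical Monge--Amp\`ere boundary $C^1$ estimate and the promotion from boundary to global is elementary convex analysis.
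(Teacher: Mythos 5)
Your proposal is correct and follows essentially the same route as the paper: Lemma \ref{upwlem} gives $\det U\ge c_0$, the boundary H\"older estimate of Proposition \ref{Hlem} propagates $\psi\ge\inf_{\partial\Omega}\psi>0$ to a lower bound on $w$ in a boundary layer, this yields $\det D^2u\le M$ there, and then a convex barrier built from the uniform convexity of $\partial\Omega$ plus the $L^\infty$ bound on $u$ gives $u_\nu\le C$, with convexity finishing the job. One small point worth spelling out in Step 1: the inference ``$w\ge c>0$ near $\partial\Omega$ implies $\det D^2u\le M$'' is not a consequence of $G'$ decreasing alone (since $G'$ could be bounded away from $0$); the paper invokes (A1), which forces $w(d)d^{1-1/n}$ to be nonincreasing and hence $w(d)\to 0$ as $d\to\infty$, making the inverse $(G')^{-1}(c)$ finite.
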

To this end, we recall the following result on boundary H\"older estimates for second-order elliptic equations with lower bound on the determinant of the coefficient matrix.
\begin{prop} (\cite[Proposition 2.1]{Le})\label{Hlem}
Let $v$ be the continuous solution to the equation $a_{ij} v_{ij} = g$ in $\Omega$ with $v=\varphi$ on $\partial\Omega$. Here the matrix $(a_{ij})$ is assumed to be measurable, positive definite and satisfies $\det (a_{ij})\geq \lambda.$ We assume that
$$\|\varphi\|_{L^{\infty}(\Omega)} + \|g\|_{L^{n}(\Omega)}\leq K$$
and $\varphi$ is $C^{\alpha}$ on $\partial\Omega$, namely,
$$|\varphi(x)-\varphi(y)|\leq L |x-y|^{\alpha}~\text{for}~x, y\in\partial\Omega.$$ Then, there exist $\delta, C$ depending only on $\lambda, n, K, L, \alpha$, $diam (\Omega)$, and the uniform convexity of $\Omega$ so that, for any $x_{0}\in\partial\Omega$, we have
$$|v(x)-v(x_{0})|\leq C|x-x_{0}|^{\frac{\alpha}{\alpha +2}},~\forall x\in \Omega\cap B_{\delta}(x_{0}). $$
\end{prop}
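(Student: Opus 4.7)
The strategy combines boundary barriers built from the uniform convexity of $\p\Omega$ with the Alexandrov--Bakelman--Pucci (ABP) maximum principle, which is the appropriate tool in the setting where only a lower bound on $\det(a_{ij})$ is available. The H\"older exponent $\alpha/(\alpha+2)$ arises naturally from an optimization that balances the H\"older exponent $\alpha$ of the boundary data against the quadratic rate at which $\p\Omega$ departs from its tangent plane at $x_0$.

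I would first normalize: translate so that $x_0=0$, subtract $\varphi(x_0)$ from both $v$ and $\varphi$ so that $v(0)=0$, and rotate so that the outer unit normal to $\p\Omega$ at $0$ is $-e_n$. Uniform convexity of $\Omega$ then supplies constants $c_0,r_0>0$ such that locally $\Omega\subset\{x_n\geq 0\}$ and $x_n\geq c_0|x|^2$ on $\p\Omega\cap B_{r_0}(0)$. Applying ABP globally to $a_{ij}v_{ij}=g$ yields the a priori bound $\|v\|_{L^\infty(\Omega)}\leq M:=K+C(n,\mathrm{diam}(\Omega))\|g\|_{L^n(\Omega)}/\lambda^{1/n}$.

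For each scale $r\in(0,r_0)$, I would introduce the linear barrier
$$\ell_r(x)=L\,r^{\alpha}+\frac{M}{c_0 r^2}\,x_n,$$
and check $\ell_r\geq\varphi$ on $\p\Omega$: on $\p\Omega\cap B_r$ the first term alone dominates the bound $|\varphi|\leq L r^{\alpha}$ coming from the H\"older assumption, while on $\p\Omega\setminus B_r$ the convexity bound $x_n\geq c_0 r^2$ makes the second term dominate $M\geq\|v\|_{L^\infty}$. Since $\ell_r$ is affine, $a_{ij}(\ell_r)_{ij}=0$, so applying ABP to $v-\ell_r$ (which satisfies $a_{ij}(v-\ell_r)_{ij}=g$ and is $\leq 0$ on $\p\Omega$) gives
$$v(x)\leq \ell_r(x)+C_0\,\mathrm{diam}(\Omega)\,\|g\|_{L^n(\Omega)}\lambda^{-1/n}\quad\text{for all } x\in\Omega.$$
Optimizing the right-hand side in $r$ for each fixed $x$ (the minimum is attained at $r\sim x_n^{1/(\alpha+2)}$) yields $v(x)\leq C\,x_n^{\alpha/(\alpha+2)}+C_E$, where $C_E$ is the ABP error term, and since $x_n\leq|x-x_0|$ this gives the desired exponent $\alpha/(\alpha+2)$. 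A symmetric argument using the barrier $-\ell_r$ produces the matching lower bound.

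The main obstacle is absorbing the additive ABP constant $C_E$, which does not vanish as $x\to x_0$. I would handle this via a rescaling/iteration argument: under the dilation $y=(x-x_0)/r$ into the unit ball, the equation becomes $a_{ij}(x_0+ry)\,\tilde v_{ij}(y)=r^2 g(x_0+ry)$, so the rescaled right-hand side has $L^n(B_1)$-norm equal to $r\,\|g\|_{L^n(B_r(x_0))}$ and the rescaled ABP error contracts linearly in $r$. The determinant lower bound $\lambda$ is scale invariant, and the rescaled boundary H\"older constant becomes $Lr^{\alpha}$. Iterating the barrier-plus-ABP estimate over dyadic scales $r=2^{-k}\delta$ and summing the resulting geometric series in the oscillation recovers the stated H\"older estimate in the neighborhood $B_{\delta}(x_0)\cap\Omega$. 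The threshold $\delta$ ends up depending only on $\lambda,n,K,L,\alpha$, $\mathrm{diam}(\Omega)$, and the uniform convexity constant $c_0$, as required.
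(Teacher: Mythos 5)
Your overall strategy---barriers built from the uniform convexity of $\p\Omega$ combined with the ABP/Aleksandrov maximum principle (which is the right tool once only $\det(a_{ij})\geq\lambda$ is assumed), with the exponent $\alpha/(\alpha+2)$ coming from balancing $r^\alpha$ against $x_n/r^2$---is the correct one and matches the argument used in \cite{Le}. The barrier $\ell_r$ you write down is right, as is the verification that $\ell_r\geq\varphi$ on $\p\Omega$ using the H\"older modulus on $\p\Omega\cap B_r$ and uniform convexity plus the $L^\infty$ bound on $\p\Omega\setminus B_r$.

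The genuine gap is in how you handle the ABP error. Applying ABP to $v-\ell_r$ on all of $\Omega$ produces the additive term $C_0\,\mathrm{diam}(\Omega)\|g\|_{L^n}\lambda^{-1/n}$, which, as you correctly observe, does not vanish as $x\to x_0$ and therefore destroys the H\"older estimate. The rescaling computation you then carry out is exactly the right observation---after dilating by $r$ the right-hand side scales like $r\|g\|_{L^n}$---but you then unnecessarily gesture at a dyadic iteration and a ``geometric series in the oscillation'' without spelling it out. This iteration is not only underspecified (you would need to track how the boundary data, the $L^\infty$ bound on the artificial boundary $\Omega\cap\p B_{2^{-k}\delta}(x_0)$, and the constant propagate from one scale to the next, and the rescaled domain is not a ball), it is also not needed. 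The clean fix is to apply ABP once on the truncated domain $\Omega\cap B_r(x_0)$: there the diameter factor is $\leq 2r$, so the error becomes $C_n\,r\,\|g\|_{L^n}\lambda^{-1/n}$, linear in $r$. One must then also check $v\leq\ell_r$ on the extra piece of boundary $\Omega\cap\p B_r(x_0)$, but uniform convexity forces $x_n\gtrsim c_0 r^2$ there as well, so the same second term of $\ell_r$ handles it. With this single local ABP in hand, choosing $r=\max\bigl(|x-x_0|,\,x_n^{1/(\alpha+2)}\bigr)$ gives
$$
v(x)-v(x_0)\leq L r^\alpha+\frac{M}{c_0 r^2}x_n+C r\leq C\,|x-x_0|^{\alpha/(\alpha+2)},
$$
since $1/(\alpha+2)\geq\alpha/(\alpha+2)$ makes the error term $Cr$ subordinate to the leading $Cr^\alpha$ for $r\leq\delta$ small. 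The symmetric lower barrier finishes the proof. In short: replace ``ABP on $\Omega$, then iterate'' with ``ABP on $\Omega\cap B_r(x_0)$, once,'' and your argument closes and coincides with the one in \cite{Le}.
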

\begin{proof} [Proof of Lemma \ref{Dubound}]
Let $\nu$ be the unit outer normal vector field on $\p\Omega$.
The crucial point in the proof is to prove an upper bound for $u_{\nu}$. 

By Lemma \ref{upwlem}, we have
a lower bound for the Hessian determinant $\det D^2 u\geq C_1$ 
where $C_1$ depends
only on $n, p, G, \Omega$, $\norm{f}_{L^{n}(\Omega)}$ and $\norm{\psi}_{W^{2,p}(\Omega)}$. Because $p>n$, $\psi$ is clearly H\"older continuous in $\overline{\Omega}$. Since $\det U\geq C_1^{n-1}$, applying  Proposition \ref{Hlem} to $U^{ij}w_{ij}= f$ in $\Omega$, we find that
$w$ is H\"older continuous at the boundary. 

Note that (A1) implies  $(w(d)d^{1-1/n})' \le 0$ and therefore
$w(d)d^{1-1/n} \le C$ for $ d\geq 1.$
Since $w=\psi\geq \inf_{\p\Omega} \psi>0$ on 
$\p\Omega$, it follows from the boundary H\"older continuity of $w$ that $w$ is uniformly bounded from below while $\det D^2 u$  is uniformly bounded from above in a 
neighborhood $\Omega_{\delta}:= \{x\in\Omega: \text{dist}(x,\p\Omega)
\leq \delta\}$ of the boundary. Here $\delta$ is a universal constant, depending only on $n, p, G,\Omega$, $\inf_{\p\Omega} \psi$, $\|f\|_{L^n(\Omega)}$ and $\norm{\psi}_{W^{2,p}(\Omega)}$.

Let $\rho$ be a strictly convex defining function of $\Omega$, that is 
$\Omega:=\{x\in \R^n: \rho(x)<0\}$, $\rho=0$ on $\p\Omega$ and $D\rho\neq 0$ on $\p\Omega$. Then
$$D^2\rho\geq \eta I_n~\text{and}~\rho\geq -\eta^{-1}~\text{in}~\Omega$$
where $\eta>0$ depends only on $\Omega$. We easily find that,
for large $\mu$, the function
$$v(x) =\varphi(x) + \mu (e^{\rho}-1)$$
is a lower bound for $u$ in $\Omega_{\delta}$.

Indeed, there 
exists a universal $C_2>0$ such that $\rho\leq -C_2$ on $\p\Omega_{\delta}
\cap \Omega$.
Since $$ D^2 v = D^2\varphi + \mu e^{\rho}(D^2\rho + D\rho \otimes D\rho)\geq D^2\varphi + \mu\eta e^{-\eta^{-1}} I_n~\text{in}~ \Omega_{\delta},$$ $v=u$ on $\p\Omega$ while
$$v\leq \|\varphi\|_{L^{\infty}(\Omega)} + \mu (e^{-C_2}-1)~ \text{on}~ \p\Omega_{\delta}
\cap \Omega,$$ we find that for $\mu$ universally large,
$$\det D^2 v\geq \det D^2 u~\text{in} ~\Omega_{\delta}$$
and $u\geq v$ on $\p\Omega_{\delta}$ by the global bound on $u$ in Lemma \ref{udet}. Hence $u\geq v$ in $\Omega_{\delta}$ by the comparison principle (see 
\cite[Theorem 17.1]{GT}). 

From $u=v$ on $\p\Omega$, we deduce that $u_{\nu}\leq v_{\nu}$ and this gives a uniform upper bound for $u_{\nu}$. By convexity, 
$$u_{\nu}(x) \ge  -\|D\varphi\|_{L^{\infty}(\Omega)}, \quad \textrm{for all } x\in \partial \Omega. $$ Because $u=\varphi$ on $\p\Omega$, the tangential derivatives of $u$ on $\p\Omega$ are those of $\varphi$. 
Thus $Du$ is uniformly bounded on $\p\Omega$. Again, by convexity, we find that $Du$ is bounded in $\Omega$ by a universal constant as stated in the lemma.
\end{proof}

Finally, we come to the key argument of the paper. To prove a uniform upper bound for $\det D^2 u$, we use the Legendre transform:
$$y= Du(x), u^*(y) = x\cdot y - u(x) (=\sup_{z\in\Omega} \left(y\cdot z-u(z))\right).$$
The Legendre transform $u^{*}$ of $u$ is defined in $\Omega^*:=Du(\Omega)$. $u^*$ is a uniformly convex, $C^4$ smooth function in
$\Omega^*$. Furthermore the Legendre transform of $u^*$ is $u$
itself. From $y=Du(x)$ we have $x=D u^*(y)$ and
$D^2 u(x)= \left(D^2 u^*(y)\right)^{-1}. $

The Legendre transform $u^{*}$ satisfies a dual equation to (\ref{AMCE}) as stated in the following lemma.
\begin{lem}\label{Legeq}
The Legendre transform $u^*$ satisfies the equation
$${U^*}^{ij} w^*_{ij}=-f(Du^*)\,\det D^2 u^*,$$
where $({U^*}^{ij})$ is the cofactor matrix of $D^2 u^*$ and $$w^*=G\left((\det
D^2 u^*)^{-1}\right)- (\det
D^2 u^*)^{-1} G' \left((\det
D^2 u^*)^{-1}\right).$$
\end{lem}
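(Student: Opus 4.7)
My plan is to reduce the dual equation to a direct calculation using the standard Legendre transform identities, together with the divergence-free property of the cofactor matrix.

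First I would record the basic identities. From $y=Du(x)$ and $x=Du^{*}(y)$, differentiation gives $u^{*}_{y_k}(y)=x_k$ and
$D^{2}u^{*}(y)=(D^{2}u(x))^{-1}$, so $\det D^{2}u^{*}(y)=(\det D^{2}u(x))^{-1}=:d^{*}(y)$ with $d:=\det D^{2}u(x)=1/d^{*}$. The cofactor relations then take the simple form $U^{*ij}(y)=d^{*}u_{ij}(x)$ and $u^{*}_{lj}(y)=d^{-1}U^{lj}(x)$, and in particular $U^{*ij}u^{*}_{lj}=d^{*}\delta^{i}_{l}$.

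Next, I would rewrite $w^{*}$ as a function of $x$. Since $d^{*}=1/d$, the definition of $w^{*}$ gives
$w^{*}(y)=G(d(x))-d(x)G'(d(x))=G(d(x))-d(x)w(x).$
Differentiating in $y_i$ via the chain rule $\partial_{y_i}=u^{*}_{ki}\partial_{x_k}$, the crucial cancellation $G'(d)d_{x_k}-d_{x_k}w=0$ leaves
$w^{*}_{y_i}=-d(x)w_{x_k}(x)\,u^{*}_{ki}(y)=-w_{x_k}(x)U^{ki}(x),$
using $u^{*}_{ki}=U^{ki}/d$. Differentiating once more with $\partial_{y_j}=u^{*}_{lj}\partial_{x_l}$ yields
$w^{*}_{y_iy_j}=-u^{*}_{lj}(y)\bigl[U^{ki}(x)w_{x_kx_l}(x)+w_{x_k}(x)\,\partial_{x_l}U^{ki}(x)\bigr].$

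Finally, I would contract with $U^{*ij}$. Using $U^{*ij}u^{*}_{lj}=d^{*}\delta^{i}_{l}$, the double index $l$ collapses onto $i$, giving
$U^{*ij}w^{*}_{y_iy_j}=-d^{*}\bigl[U^{ki}w_{x_kx_i}+w_{x_k}\partial_{x_i}U^{ki}\bigr].$
The second bracketed term vanishes by the divergence-free property of the cofactor matrix, $\partial_{x_i}U^{ki}=0$, and the first equals $U^{ij}w_{ij}=f(x)$ by (\ref{AMCE}). Hence $U^{*ij}w^{*}_{ij}=-d^{*}f(x)=-f(Du^{*})\det D^{2}u^{*}$, as claimed.

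I do not expect any real obstacle beyond careful bookkeeping; the only substantive input is the divergence-free identity $\partial_{x_i}U^{ki}=0$, and the rest is the algebraic miracle that the choice $w^{*}=G(d^{*-1})-d^{*-1}G'(d^{*-1})$ is exactly what makes the $G'(d)d_{x_k}$ terms cancel at the first differentiation step, turning the computation into a clean chain-rule exercise.
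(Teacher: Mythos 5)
Your proof is correct and follows essentially the same route as the paper's: both exploit the Legendre identities $D^2u^*=(D^2u)^{-1}$, the chain rule between $x$ and $y$ coordinates, the cancellation coming from $G'(d)=w$ in the derivative of $G(d)-dG'(d)$, and the divergence-free property of a cofactor matrix. The only cosmetic difference is the direction of the calculation: the paper differentiates $w$ in $y$-coordinates, arrives at $w_j=-w^*_kU^{*kj}$, and then uses $\partial_{y_j}U^{*kj}=0$ to conclude $U^{ij}w_{ij}=-(d^*)^{-1}U^{*kj}w^*_{kj}$; you instead differentiate $w^*$ in $y$-coordinates, arrive at $w^*_i=-w_{x_k}U^{ki}$, and then use $\partial_{x_i}U^{ki}=0$ to conclude $U^{*ij}w^*_{ij}=-d^*U^{ij}w_{ij}$. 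These are dual formulations of the same computation, equally clean, and both yield the identity immediately from (\ref{AMCE}).
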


This lemma was previously observed by Trudinger-Wang \cite{TW00} (in the proof of Lemma 3.2 there) and Zhou \cite{Zhou} (before the proof of Lemma 3.2 there). The idea is as follows.
Since $u$ is a critical point of the functional $J$, $u^*$ is a critical point of the dual functional $J^*$ under local perturbations,
so it satisfies the Euler-Lagrange equation
of $J^*$ and this gives the conclusion of Lemma \ref{Legeq}. 
 The dual functional with respect to the Legendre transform is given by
$$J^*(u^*)= \int_{\Omega^*}G([\det D^2u^*]^{-1})\det D^2u^*\, dy
-\int_{\Omega^*}f(Du^*)(yDu^*-u^*)\det D^2u^*\, dy.$$

We give here a direct proof of Lemma \ref{Legeq}.

\begin{proof}[Proof of Lemma \ref{Legeq}]
For simplicity, let
$d= \det D^2 u$ and $d^*= \det D^2 u^*.$ Then $d= {d^{*}}^{-1}$.
We denote by $(u^{ij})$ and $({u^*}^{ij})$ the inverses of the Hessian matrices $D^2 u= (u_{ij})= (\frac{\p^2 u}{\p x_i\p x_j})$ and $D^2 u^*=(u^*_{ij})=(\frac{\p^2 u^*}{\p y_i\p y_j})$.
Note that $w= G' (d)= G' ({d^{*}}^{-1}).$ Thus
$$w_{j} =\frac{\p w}{\p x_j} =\frac{\p G' ({d^*}^{-1})}{\p y_k} \frac{\p y_k}{\p x_j} = \left[\frac{\p }{\p y_k} G' ({d^*}^{-1})\right]u_{kj}=
\left[\frac{\p }{\p y_k} G' ({d^*}^{-1})\right]{u^*}^{kj}.$$
Clearly,

$${d^{*}}^{-1}\frac{\p }{\p y_k} G' ({d^*}^{-1}) = -\frac{\p}{\p y_k} \left[G ({d^*}^{-1})- {d^*}^{-1}G' ({d^*}^{-1})\right ]= - w^{*}_k,$$
from which it follows that
$w_j = - w^{*}_k (U^{*})^{kj}.$
Similarly,
$w_{ij}= \frac{\p }{\p y_l} w_j {u^*}^{li}.$
Hence, using
$$U^{ij} = (\det D^2 u) u^{ij}= (d^{*})^{-1} u^{*}_{ij},$$
and the fact that $U^{*}= ({U^*}^{ij})$ is divergence-free, we find from (\ref{AMCE}) that
\begin{eqnarray*}f(Du^{*})= U^{ij} w_{ij}=(d^{*})^{-1} u^{*}_{ij} {u^*}^{li} \frac{\p }{\p y_l} w_j  
= -(d^{*})^{-1}  \frac{\p }{\p y_j}\left\{ w^{*}_k
{U^*}^{kj}\right\} = -(d^{*})^{-1} {U^{*}}^{kj} w^{*}_{kj}.
\end{eqnarray*}
Thus, the lemma is proved.
\end{proof}

We are now ready to prove that the Hessian determinant $\det D^2 u$ is universally bounded away from $0$ and $\infty$.
\begin{lem} \label{boundd} Assume (A1), (B2) and (A3) are satisfied. Then, there exists a constant $C>0$ depending 
only on $n, p, G, \Omega$, $\norm{f}_{L^{n}(\Omega)}$, $\|\varphi\|_{W^{4, p}(\Omega)}$, $\norm{\psi}_{W^{2,p}(\Omega)}$ and $\inf_{\p\Omega}\psi$
such that 
 $$C^{-1}\leq \det D^2 u\leq C.$$
\end{lem}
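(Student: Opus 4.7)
The lower bound $\det D^2 u \geq C^{-1}$ is already furnished by Lemma \ref{upwlem}, so only the upper bound requires attention. My plan is to apply the Aleksandrov--Bakelman--Pucci maximum principle to the dual equation from Lemma \ref{Legeq}, then invoke the coercivity condition (B2) to translate a bound on $w^{*}$ into a bound on $\det D^2 u$.

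By Lemma \ref{Dubound}, $|Du|\le C$ in $\Omega$, so $\Omega^{*}:=Du(\Omega)$ is a bounded open set with universally controlled diameter, and since $u$ is $C^{4}$ and uniformly convex in $\overline{\Omega}$, the map $Du:\overline{\Omega}\to\overline{\Omega^{*}}$ is a homeomorphism. By Lemma \ref{Legeq}, $u^{*}$ solves
$${U^{*}}^{ij} w^{*}_{ij} = -f(Du^{*})\,\det D^{2} u^{*}\quad \text{in }\Omega^{*},$$
where $w^{*}=G(\det D^{2}u)-(\det D^{2}u)G'(\det D^{2}u)$. Since $\det U^{*}=(\det D^{2}u^{*})^{n-1}$, the ABP estimate (\cite[Theorem 9.1]{GT}) gives
$$\sup_{\Omega^{*}} w^{*} \le \sup_{\partial \Omega^{*}} w^{*} + C(n,\mathrm{diam}(\Omega^{*}))\left(\int_{\Omega^{*}} \frac{|f(Du^{*})|^{n}(\det D^{2}u^{*})^{n}}{\det U^{*}}\,dy\right)^{1/n}.$$
The integrand simplifies to $|f(Du^{*})|^{n}\det D^{2}u^{*}$, and the change of variables $x=Du^{*}(y)$ (with Jacobian $\det D^{2}u^{*}$) shows this integral equals $\|f\|_{L^{n}(\Omega)}^{n}$.

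For the boundary term, I would recycle the argument already used in the proof of Lemma \ref{Dubound}: because $\det D^{2} u$ is bounded from below by Lemma \ref{upwlem}, Proposition \ref{Hlem} yields boundary H\"older continuity of $w$; combined with $w=\psi\ge\inf_{\partial\Omega}\psi>0$ on $\partial\Omega$ and (A1), this forces $\det D^{2} u$ to be bounded above on $\partial\Omega$ (and indeed in a boundary neighborhood). Consequently $w^{*}$, being a continuous function of $\det D^{2} u$, is uniformly bounded on $\partial\Omega^{*}=Du(\partial\Omega)$. Combining the two contributions yields $\sup_{\Omega^{*}} w^{*}\le C$, and therefore $G(\det D^{2} u)-(\det D^{2} u)G'(\det D^{2} u)\le C$ throughout $\Omega$. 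By (B2), this finally forces $\det D^{2} u\le C$ pointwise, completing the proof.

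The main conceptual step, and the one that required the paper's new viewpoint, is recognizing that $w^{*}$ is exactly the quantity controlled by (B2) and that the right-hand side $f(Du^{*})\det D^{2} u^{*}$ in the dual equation is perfectly tuned so that the ABP invariant $\|\cdot\|_{L^{n}}/(\det U^{*})^{1/n}$ becomes $|f|$ in the $x$-variables. Past this observation, the remaining work is routine: one verifies that the boundary data $w^{*}|_{\partial\Omega^{*}}$ is controlled (using the ingredients already assembled in earlier lemmas), and that $\Omega^{*}$ is sufficiently tame for ABP --- which it is, since $u^{*}\in C^{4}(\Omega^{*})$ and $\Omega^{*}$ has bounded diameter.
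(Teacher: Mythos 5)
Your proposal is correct and follows essentially the same route as the paper: ABP applied to the dual equation of Lemma \ref{Legeq} on $\Omega^{*}$, the change of variables that turns the ABP integrand into $\|f\|_{L^n(\Omega)}$, and the coercivity (B2) converting a bound on $w^{*}$ into an upper bound on $\det D^2 u$. The one place you take a slightly longer path is the boundary term: since $w=\psi$ exactly on $\partial\Omega$, one has $\det D^2 u=(G')^{-1}(\psi)$ there directly, so $w^{*}|_{\partial\Omega^{*}}=G((G')^{-1}(\psi))-(G')^{-1}(\psi)\psi$ is controlled by $\|\psi\|_{L^\infty}$ and $\inf\psi>0$ without needing the boundary H\"older estimate of Proposition \ref{Hlem} or condition (A1); your argument is not wrong, just heavier than necessary.
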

\begin{proof}[Proof of Lemma \ref{boundd}] We use the same notation as in Lemma \ref{Legeq} and its proof. 
By Lemma \ref{Dubound}, $diam(\Omega^*)$ is bounded by a universal constant $C$. With (A1) and (A3), we can apply Lemma \ref{Legeq} to conclude that
${u^{*}}^{ij} w^{*}_{ij} = -  f(Du^{*}(y))~\text{in}~\Omega^*$
with $$w^*=G(d) - d G'(d) = G({G^{'}}^{-1}(w))- {G^{'}}^{-1}(w) w= G({G^{'}}^{-1}(\psi))- {G^{'}}^{-1}(\psi) \psi~ \text{on}~ \p\Omega^*.$$
Applying the ABP estimate \cite[Theorem 9.1]{GT} to $w^{*}$ on $\Omega^{*}$, and then changing of variables $y= Du(x)$ with $dy = \det D^2 u~ dx,$ we obtain
\begin{eqnarray*}
 \|w^{\ast}\|_{L^{\infty}(\Omega^*)} &\leq& \|w^{\ast}\|_{L^{\infty}(\p\Omega^*)} + C_n diam (\Omega^*) \left\|\frac{f(Du^{*})}{(\det {u^{*}} ^{ij})^{1/n}}\right\|_{L^n(\Omega^*)}\\
 &=& C + C \left(\int_{\Omega^{*}} \frac{|f|^n(Du^{*})}{ (\det D^2 u^*)^{-1}}~ dy\right)^{1/n}\\&=& 
 C + C  \left(\int_{\Omega} \frac{|f|^n(x)}{ \det D^2 u} \det D^2 u~ dx\right)^{1/n}  = C + C \|f\|_{L^n(\Omega)}.
\end{eqnarray*}
Since $w^{*} = G(d)- d G'(d)$ and the coercivity condition (B2) is satisfied, the above estimates give a uniform upper bound for $d=\det D^2 u$. The lower bound for $\det D^2 u$ follows from Lemma \ref{upwlem}.
\end{proof}

With Lemma \ref{boundd}, we can now complete the proof of the global $W^{4,p}$ estimates in Theorem \ref{keythm}.

\begin{proof}[Proof of theorem \ref{keythm}] The proof here is taken from \cite[Theorem 1.2]{Le}. We include it for completeness. 
It is an application of two regularity results: 
\begin{myindentpar}{1cm}
(i) Global H\"older continuity estimates for solutions of the linearized Monge-Amp\`ere equations \cite{Le} which are the global counterparts of
the fundamental interior H\"older estimates by
Caffarelli-Guti\'errez \cite{CG}, and \\
(ii) Global $C^{2,\alpha}$ estimates for the Monge-Amp\`ere equation \cite{TW08} when the Monge-Amp\`ere measure is only assumed to be globally $C^{\alpha}$.
\end{myindentpar}
By Lemma \ref{boundd}, 
$C^{-1}\leq \det D^2 u \leq C.$
Note that, by (\ref{AMCE}), $w$ is the solution to the linearized Monge-Amp\`ere equation $U^{ij}w_{ij} =f$ with boundary data $w=\psi.$ Because $\psi\in W^{2,p}(\Omega)$ with $p>n$, $\psi$ is clearly H\"older continuous on $\partial\Omega$. 
Thus, by \cite[Theorem 1.4]{Le}, $w \in C^{\alpha}(\overline{\Omega})$ for some $\alpha>0$ depending on the data of (\ref{AMCE})-(\ref{SBV}). Rewriting the equation for $w$ as
$$\det D^2 u = (G')^{-1}(w),$$
with the right hand side being in $C^{\alpha}(\overline{\Omega})$
and noticing $u=\varphi$ on $\p\Omega$ where $\varphi\in C^{3}(\overline{\Omega})$ because $\varphi\in W^{4,p}(\Omega)$ and $p>n$, we obtain
$u\in C^{2,\alpha}(\overline{\Omega})$ \cite[Theorem 1.1]{TW08}. Thus the first equation of (\ref{AMCE}) is a uniformly elliptic, second order partial differential equations in $w$
with $L^p$ right hand side. Hence $w\in W^{2,p}(\Omega)$ and in turn $u\in W^{4, p}(\Omega)$ with desired estimate
\begin{equation*}
\norm{u}_{W^{4,p}(\Omega)}\leq C,
\end{equation*}
where $C$ depends on $n, p, G, \Omega$, $\norm{f}_{L^{p}(\Omega)}, \norm{\varphi}_{W^{4,p}(\Omega)}, \norm{\psi}_{W^{2,p}(\Omega)}$, and $\inf_{\Omega} \psi.$
\end{proof}

\section{Proofs of the Remarks}
In this final section, for completeness, we give the proofs of Remarks \ref{ABrem}, \ref{smallpt}, \ref{nearb} and \ref{Lnsmall}.
\begin{proof}[Proof of Remark \ref{ABrem}] The proof of (b) is elementary and follows from direct computation so we skip it. For (a), suppose that $G$ satisfies (A1)-(A3). 
Let $$M(d)= G(d) - d G'(d)-\left[G(1)- w(1) + c(1-\frac{1}{n}) \log d\right].$$ Then, from $G''(d)= w'(d)$, and (A1), we have
$$
 M'(d) = -dw'-c(1-\frac{1}{n})\frac{1}{d}\geq (1-\frac{1}{n})w-c(1-\frac{1}{n})\frac{1}{d}.
$$
Thus, if $d\geq 1$ then by (A2), $M'(d)\geq 0.$ 
Therefore, for $d\geq 1$, we have $M(d)\geq M(1)=0$ and the result follows.
\end{proof}

\begin{proof}[Proof of Remark \ref{smallpt}] In all these examples, the function
$u$ is convex and of the form
$$u= v(r) = r^{\alpha},~r=|x|,~\text{with}~1<\alpha<2. $$
We can compute
$$\det D^2 u = v^{''}(\frac{v'}{r})^{n-1}=  \alpha^{n} (\alpha-1) r^{(\alpha-2)n},
w= (\det D^2 u)^{\theta-1}= [\alpha^n (\alpha-1)]^{\theta-1} r^{(\alpha-2)n (\theta-1)}\equiv W(r).
$$
From $\theta<1$ and $1<\alpha<2$, we have $w=0$ at $0\in\Omega$. Since $D^2 u$ and $(D^2 u)^{-1}$ are similar to 
$\text{diag}~ (v^{''}, \frac{v'}{r}, \cdots, \frac{v'}{r})$
and 
$\text{diag}~(\frac{1}{v^{''}}, \frac{r}{v^{'}}, \cdots, \frac{r}{v^{'}}),$ we can
compute
\begin{eqnarray*}U^{ij}w_{ij}= \frac{v^{''} (v^{'})^{n-1}}{r^{n-1}} \left(\frac{W^{''}}{v^{''}} + (n-1) \frac{W^{'}}{v^{'}}\right)
= \frac{[W^{'} (v^{'})^{n-1}]^{'}}{r^{n-1}}.
\end{eqnarray*}
It follows that for some $C_1= C_1(n,\alpha,\theta)>0$ and $C_2= C_2(n,\alpha,\theta)>0$,
\begin{align}\label{feq}
 W^{'} (v^{'})^{n-1} &= C_1r^{(\alpha-1)(n-1) + (\alpha-2)n (\theta-1)-1}= C_1r^{ (\alpha-2)(n\theta-1) + n-2},\nonumber\\
f&= U^{ij} w_{ij} =\frac{[W^{'} (v^{'})^{n-1}]^{'}}{r^{n-1}}= C_2r^{(\alpha-2)(n\theta-1) -2}.
\end{align}
(i) In this case, since $p/2<p<n$, we have 
$$\alpha = 1 + \frac{n-p}{2(3p-n)}\in (1,2)~\text{and}~\theta= \frac{1}{2n}(\frac{n}{p}-1) \in (0, \frac{1}{n}).$$
We find that $$(\alpha-2)(n\theta-1) -2= -\frac{1}{4} (1+3\frac{n}{p})>-n/p$$ and hence $f\in L^p(\Omega)$.
Since $\alpha-4<-2$, $|D^4 u|\sim r^{\alpha-4}\not\in L^{n/2}(\Omega)$  and hence $u\not\in W^{4, p}(\Omega)$.\\
(ii) Note that $u= r^{\alpha}$ where
$\alpha= 2 + \frac{2}{n\theta-1}\in (1,2).$
Then $f$ defined by (\ref{feq}) is a positive constant. However, as above, $u\not\in W^{4, n/2}(\Omega)$.\\
(iii) As in (ii), $u\not\in W^{4, n/2}(\Omega)$. In this case, we note that $f$ defined by (\ref{feq}) belongs to $L^p(\Omega)$ because
$(\alpha-2)(n\theta-1)-2= -1 -3n\theta/4>-1 -n\theta/2=-n/p.$
\end{proof}

\begin{proof}[Sketch of the proof of Remark \ref{nearb}] For $\theta<1/n$, (A1) and (A3) are satisfied, hence we have 
 the global gradient bound for $u$ as in Lemma \ref{Dubound}. The proof of Lemma \ref{Dubound} also shows that $\det D^2 u$ is uniformly bounded away from $0$
 and $\infty$ in a neighborhood $\Omega_{2\delta}$ of $\p\Omega$. Thus, as in \cite[Proposition 3.2]{S}, we find that $u$ separates quadratically from its tangent planes 
 on the boundary $\p\Omega$. Therefore, we can use 
\cite[Theorem 1.4]{Le} in $\Omega_{2\delta}$ to conclude that $w\in C^{\alpha} (\overline{\Omega_{3\delta/2}})$. By
the pointwise $C^{2,\alpha}$ estimates at the boundary for the 
Monge-Amp\`ere equation \cite{S}, applied to $\det D^2 u = (G')^{-1}(w)$,  we conclude that $u$ is $C^{2,\alpha}$ in $\overline{\Omega_{\delta}}$. Now, $U^{ij}\p_{ij}$ is a uniformly elliptic second order operator
with $C^{\alpha}$ coefficients in $\overline{\Omega_{\delta}}$. Thus, with $f\in L^p(\Omega)$, we have $w\in W^{2,p}(\Omega_{\delta})$. The $W^{4, p}$ estimates for $u$ in $\Omega_{\delta}$ follow.
\end{proof}

\begin{proof} [Proof of Remark \ref{Lnsmall}] As in the proof of Theorems \ref{mainthm} and \ref{keythm}, it suffices to prove a positive lower bound for $w$ when $\|f^{+}\|_{L^n(\Omega)}$ is small.
With $\theta<\frac{1}{n}$, (A3) is satisfied. Thus, by Lemma \ref{upwlem}, $w$  is uniformly bounded from above:
$\|w\|_{L^{\infty}(\Omega)} \leq C_1.$
Let $c_1:=\min_{\p\Omega}\psi>0$ Then, by recalling that
$\det U = (\det D^2 u)^{n-1} = w^{\frac{n-1}{\theta-1}},$
and using the ABP estimate to $U^{ij}w_{ij}=f\leq f^{+}$ in $\Omega$, we find
\begin{eqnarray*}\min_{\Omega} w&\geq& 
\min_{\partial\Omega} \psi -\frac{\text{diam}(\Omega)}{nw_n^{1/n}} \left\|\frac{f^{+}}{(\det U)^{1/n}}\right\|_{L^n(\Omega)}=
c_1 - C_2 \left\|f^{+}w^{\frac{n-1}{n(1-\theta)}}\right\|_{L^n(\Omega)} \\ &\geq& c_1 - C_2 \|f^{+}\|_{L^n(\Omega)} C_1 ^{\frac{n-1}{n(1-\theta)}}\geq c_1/2
\end{eqnarray*}
if
$\|f^{+}\|_{L^n(\Omega)} \leq c_1/\left(2C_2 C_1^{\frac{n-1}{n(1-\theta)}}\right).$
\end{proof}

\end{document}